\let\mathcal\mathscr
\numberwithin{equation}{section}
\newtheorem{theorem}{Theorem}[section]
\newtheorem{lemma}[theorem]{Lemma}
\newtheorem{proposition}[theorem]{Proposition}
\newtheorem{corollary}[theorem]{Corollary}
\newtheorem{conjecture}[theorem]{Conjecture}
\theoremstyle{definition}
\newtheorem{definition}[theorem]{Definition}
\newtheorem{remark}[theorem]{Remark}
\renewcommand{\phi}{\varphi}
\renewcommand{\rho}{\varrho}
\newcommand{\ZZ}{\mathbb{Z}}
\newcommand{\NN}{\mathbb{N}}
\newcommand{\QQ}{\mathbb{Q}}
\newcommand{\RR}{\mathbb{R}}
\renewcommand{\emptyset}{\varnothing}
\renewcommand{\leq}{\leqslant}
\renewcommand{\geq}{\geqslant}
\newcommand{\x}{\mathbf{x}}
\newcommand{\y}{\mathbf{y}}
\renewcommand{\c}{\mathbf{c}}
\renewcommand{\b}{\mathbf{b}}
\renewcommand{\a}{\mathbf{a}}
\newcommand{\bgamma}{\boldsymbol{\gamma}}
\DeclareMathOperator{\Mod}{mod}
\renewcommand{\bmod}[1]{\,(\Mod{#1})}
\title{On the existence of magic squares of powers}
\author{Nick Rome}
\email{rome@tugraz.at}
\address{TU Graz, Institute of Analysis and Number Theory, Kopernikusgasse 24/II, 8010 Graz, Austria.}
\author{Shuntaro Yamagishi}
\email{shuntaro.yamagishi@ist.ac.at}
\address{IST Austria, Am Campus 1, 3400 Klosterneuburg, Austria.}
\begin{document}
\maketitle

\begin{abstract}
For any $d \geq 2$, we prove that there exists an integer $n_0(d)$ such that there exists an $n \times n$ magic square of $d^\textnormal{th}$ powers for all $n \geq n_0(d)$.
In particular, we establish the existence of an $n \times n$ magic square of squares for all $n \geq 4$, which settles a conjecture of V\'{a}rilly-Alvarado.

All previous approaches had been based on constructive methods and the existence of $n \times n$ magic squares of $d^\textnormal{th}$ powers
had only been known for sparse values of $n$. We prove our result by the Hardy-Littlewood circle method, which in this setting essentially reduces the problem
to finding a sufficient number of disjoint linearly independent  subsets of the columns of the coefficient matrix of the equations defining magic squares. We prove
an optimal (up to a constant) lower bound for this quantity.
\end{abstract}

\section{Introduction}
Let $n \geq 1$ be an integer.
A \emph{magic square} is an $n \times n$ grid of distinct positive integers whose columns, rows, and two major diagonals all sum to the same number.
The number to which all rows, columns and diagonals sum is known as the square's \emph{magic constant}.

Magic squares have a long and rich history. Legend has it that the earliest recorded $3 \times 3$ magic square was first observed by Emperor Yu on the shell of a sacred turtle, which emerged from the waters of the Lo River \cite[pp.118]{Camm}.
Since then magic squares have appeared in various cultures, and have been an object of curiosity
in art, philosophy, religion and mathematics. The study of magic squares with additional structure is a topic that has garnered great interest in both recreational and research mathematics.

The first $4 \times 4$ magic square of squares (Figure \ref{fig:euler}) was constructed by Euler, in a letter
sent to Lagrange in $1770$. Though Euler did not provide any explanation of how he constructed the square,
he presented his method to the St. Petersburg Academy of Sciences the same year; the construction is based on the observation that the product of two sums of four squares can itself be expressed as a sum of four squares.
This idea was used in $1754$ by Euler to make partial progress, which led Lagrange, in the same year as the letter, to the first complete proof of the \emph{four square theorem}: every positive integer is the sum of at most four squares \cite{boyer, VA}.

\begin{figure}[!h]\label{fig:euler}
\[
\begin{array}{|c|c|c|c|}
\hline
68^2 & 29^2 & 41^2 & 37^2\\
\hline
17^2 & 31^2 & 79^2 & 32^2 \\
\hline
59^2 & 28^2 & 23^2 & 61^2 \\
\hline
11^2 & 77^2 & 8^2  & 49^2\\
\hline
\end{array}\]
\caption{Euler's $4 \times 4$ magic square of squares with magic constant $8515$.}
\end{figure}

The search for a $3 \times 3$ magic square of squares was popularized by Martin Gardner in $1996$
offering a \$$100$ prize to the first person to construct such a square,
though the problem had already been posed by Edouard Lucas in $1876$ and Martin LaBar in $1984$ \cite{boyer, VA}.
Despite the great interest and efforts, the prize remains unclaimed (as do the \euro 100 and bottle of champagne offered by Boyer \cite{list} for the same problem). 
However, there are number of results making progress on this problem,
for which a comprehensive list can be found in \cite{list}.
A $3 \times 3$ magic square of squares gives rise to a rational point with nonzero coordinates on a surface
cut out by $6$ quadrics in the space $\mathbb{P}^{8}$. A deep conjecture of Lang predicts that this surface contains only finitely many curves of genus $0$ or $1$, and that outside of these curves it has only finitely many rational points.
The method of \cite{BTVA} establishes that indeed this surface contains only finitely many curves of genus $0$ or $1$.
In fact, we know the surface contains curves of genus $0$ or $1$ which do not correspond to magic squares (for instance, lines parametrising repeated entries). 
Therefore, it seems plausible that perhaps there are no $3 \times 3$ magic squares of squares, or that they are remarkably rare.

An $n \times n$ magic square of squares corresponds to a rational point on a variety cut out by $2n$ quadrics in the space $\mathbb{P}^{n^2 - 1}$. In contrast to the $n = 3$ case, it is quite reasonable from a geometric point of view that these spaces would carry many rational points for $n \geq 5$ \cite{VA}. This line of logic
has led V{\'a}rilly-Alvarado to make the following conjecture.

\begin{conjecture}[{\cite[Conjecture 4.3]{VA}}]\label{conj:tva}
There is a positive integer $n_0(2)$ such that for every integer $n\geq n_0(2)$, there exists an $n \times n$ magic square of squares.
\end{conjecture}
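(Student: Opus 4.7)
The plan is to apply the Hardy--Littlewood circle method to produce a main term of size $X^{n^2 - 4n}$ for the number $N(X)$ of $n \times n$ magic squares of squares with entries at most $X$; this will be strictly positive for all $n$ beyond some $n_0(2)$. Writing $y_{ij} = x_{ij}^2$, the magic square conditions become a system of linear equations in the $y_{ij}$, and after eliminating the common magic constant and the single linear dependency between the row and column sums one is left with $R = 2n$ independent equations encoded by an $R \times n^2$ coefficient matrix $A$. With $f(\alpha) = \sum_{1 \leq x \leq X} e(\alpha x^2)$ and $L_{ij}$ the linear form given by the $(i,j)$-th column of $A$, orthogonality yields
\[
N(X) = \int_{[0,1]^R} \prod_{(i,j) \in [n]^2} f(L_{ij}(\boldsymbol{\alpha})) \, d\boldsymbol{\alpha}.
\]

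After the standard Farey dissection into major arcs $\mathfrak{M}$ and minor arcs $\mathfrak{m}$, the major arc contribution should be $\mathfrak{S}\mathfrak{J}\, X^{n^2 - 4n}(1 + o(1))$; positivity of the singular integral $\mathfrak{J}$ would follow by exhibiting a smooth real point of the magic-square variety (for instance by rescaling Euler's example, with block-diagonal extensions for larger $n$), and positivity of the singular series $\mathfrak{S}$ from producing mod-$p$ solutions that Hensel lifts for every prime. The main obstacle is showing
\[
\int_{\mathfrak{m}} \prod_{(i,j)} |f(L_{ij}(\boldsymbol{\alpha}))| \, d\boldsymbol{\alpha} = o(X^{n^2 - 4n}).
\]
The standard route here is to locate many pairwise disjoint subsets $B_1, \ldots, B_K$ of the columns of $A$, each forming a basis of $\RR^R$; for each basis $B_k$ the change of variables $\boldsymbol{\alpha} \mapsto (L_{ij}(\boldsymbol{\alpha}))_{(i,j) \in B_k}$ factorises the corresponding block of the integrand, after which H\"older's inequality together with Weyl's bound on the minor arcs and the mean-value estimate $\int_0^1 |f(\beta)|^{2k} \, d\beta \ll X^{2k-2+\varepsilon}$ should deliver the required saving, provided $K$ is large enough relative to $R$.

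Everything thus reduces to a combinatorial lemma, which I expect to be the technical heart of the argument: the coefficient matrix $A$ admits $\gg n$ pairwise disjoint subsets of columns, each a basis of $\RR^{2n}$. Since $\lfloor n/2 \rfloor$ is the trivial upper bound coming from $n^2 / (2n)$, this is optimal up to the constant. To prove the lemma I would first attempt an explicit construction, partitioning a dense portion of the grid $[n]^2$ into $\gg n$ blocks of size $2n$, each meeting every row, column, and diagonal of the square in a sufficiently balanced way that the corresponding columns of $A$ are linearly independent; failing a clean direct construction, an application of Edmonds' matroid union theorem reduces the problem to bounding the rank deficit of arbitrary column subsets, a purely combinatorial question about the incidence structure of rows, columns, and diagonals of an $n \times n$ grid. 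Finally, configurations in which two entries coincide lie on a subvariety of strictly smaller dimension, so a routine argument (another circle method application with one fewer variable, or a direct upper bound) shows they contribute $o(X^{n^2 - 4n})$; hence $N(X) > 0$ for every $n \geq n_0(2)$.
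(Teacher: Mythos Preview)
Your proposal follows essentially the same strategy as the paper: apply the circle method for systems of diagonal forms and reduce everything to the combinatorial lemma that the coefficient matrix admits $\gg n$ pairwise disjoint bases among its columns. You correctly identify this lemma as the technical heart, and the paper indeed devotes Sections~4--7 to an explicit construction proving it (your matroid-union fallback is not pursued there).

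Two small points are worth flagging. First, you work with the magic constant free, obtaining $R=2n$ homogeneous equations, whereas the paper fixes $\mu$ and works with $R_0=2n+1$ inhomogeneous equations; both setups are viable, but the coefficient matrices differ slightly, so the combinatorial lemma you would need is not literally the one the paper proves. Second, your suggested smooth real point---``rescaling Euler's example, with block-diagonal extensions for larger $n$''---does not work as stated: a block-diagonal array built from a $4\times 4$ magic square is not in general a point on the magic-square variety for arbitrary $n$ (the diagonal sums fail unless $4\mid n$, and one must still specify the off-diagonal blocks). The paper instead observes that the constant array $x_{ij}=\zeta$ is a non-singular real solution for every $n$, since the Jacobian there is $d\zeta^{d-1}M_0$, which has full rank; this also handles the local densities, as $x_{ij}\equiv 1$ gives a non-singular $p$-adic point for every prime. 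With that correction your outline matches the paper's argument.
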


In the light of Euler's example above, V{\'a}rilly-Alvarado further suggested that the conjecture holds with $n_0(2) = 4$. We establish that this is indeed the case.

\begin{theorem}\label{main1}
For every integer $n\geq 4$, there exists an $n \times n$ magic square of squares.
\end{theorem}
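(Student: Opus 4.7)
The plan is to apply the Hardy--Littlewood circle method to count integer tuples $(y_{ij})_{1 \leq i,j \leq n}$ with $|y_{ij}| \leq P$ whose squares $y_{ij}^2$ satisfy the $2n+2$ linear magic square conditions. Writing $N(P)$ for this count, the goal is to show $N(P) \to \infty$ as $P \to \infty$, with the ``degenerate'' contribution from tuples having zero, repeated, or sign-related entries of strictly smaller order; the surviving positive main term then guarantees a genuine magic square of squares for every sufficiently large $n$.

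First I would subtract one row-sum equation from each remaining row, column and the two diagonals to rewrite the constraints as a homogeneous system
\[
\sum_{1 \leq i,j \leq n} c_{r,i,j}\, y_{ij}^2 = 0, \qquad r = 1, \ldots, R,
\]
where $R = 2n+1$ is the number of linearly independent relations. The coefficient matrix $C = (c_{r,i,j})$ has entries in $\{-1,0,1\}$, and each of its $n^2$ columns has at most four nonzero entries, recording which row, column and diagonal the position $(i,j)$ lies in. A standard exponential-sum setup on the torus $\T^R$ then expresses $N(P)$ as an integral which I would split into major and minor arcs in the Birch style, specialised to the diagonal quadratic setting.

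The main obstacle, and the heart of the argument, is the minor arc bound. For such a diagonal quadratic system, Weyl differencing (as developed by Br\"udern and Cook) reduces the required saving to a purely combinatorial condition on $C$: one must locate many disjoint subsets $S_1, \ldots, S_K$ of $\{(i,j) : 1 \leq i,j \leq n\}$, each of cardinality $R$, such that the square submatrix of $C$ formed by the columns indexed by $S_\ell$ is invertible over $\QQ$. Since $n^2/R \sim n/2$, the best possible lower bound is $K \gg n$, and proving this optimal bound is the key new input. I would attempt an explicit construction, partitioning the grid into ``strips'' of $R$ cells chosen to meet each row, each column and the two diagonals in patterns that force the restricted submatrix to have full rank, and using the last few cells per strip to correct for the coupling introduced by the diagonals.

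Granted the combinatorial lemma, the standard circle method machinery yields $N(P) \gg \mathfrak{S}\, J_\infty\, P^{n^2 - 2R}$ for all $n \geq n_0$, where $\mathfrak{S}$ is the singular series and $J_\infty$ the singular integral. Both factors are positive: Hensel-lifting Euler's $4 \times 4$ example, padded by simple constant or translation-symmetric blocks, furnishes nonsingular solutions in $\ZZ_p$ for every prime $p$ and in $\RR$ for every $n$. Since the tuples with vanishing or repeated entries contribute only $O(P^{n^2 - 2R - 1 + \ve})$, the main term dominates and produces genuine magic squares of squares for every $n \geq n_0$. The proof is then completed by exhibiting, for each remaining value $4 \leq n < n_0$, an explicit magic square of squares --- Euler's construction handles $n = 4$, and for larger small $n$ one can splice Euler's square with appropriate known constructions (e.g.\ arithmetic-progression--based or block-diagonal assemblies) to cover all intermediate cases.
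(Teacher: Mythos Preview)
Your overall plan matches the paper's approach closely: the circle method for the diagonal quadratic system, the reduction (via Br\"udern--Cook) of the minor-arc bound to the combinatorial problem of finding $K \gg n$ disjoint full-rank $(2n+1)$-column subsets of the coefficient matrix, and the separate treatment of degenerate (repeated-entry) tuples. This is exactly what the paper does, and your identification of the partitionability lemma as ``the heart of the argument'' is correct.

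There is, however, a genuine gap in your treatment of small $n$. Your combinatorial construction, like the paper's, will only produce enough disjoint independent sets once $n$ exceeds some absolute threshold $n_0$; in the paper this threshold is around $112$--$135$. You propose to cover the range $4 \leq n < n_0$ by ``splicing Euler's square with appropriate known constructions (e.g.\ arithmetic-progression--based or block-diagonal assemblies)''. This does not work: block-diagonal assemblies of magic squares are not magic squares (the row sums in different blocks need not agree), and the standard composite/product constructions for magic squares neither preserve the perfect-square property of the entries nor cover \emph{every} $n$ in an interval. No general splicing mechanism of the kind you describe is known. The paper closes this gap differently: it cites the explicit examples catalogued by Boyer for $4 \leq n \leq 64$, and it runs the combinatorial algorithm computationally for $50 \leq n \leq 140$ to verify that $\min_\sigma T_\sigma \geq 5$ there, so that the circle method itself applies down to $n = 50$. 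The overlap of the two ranges finishes the proof. Without something comparable, your argument establishes Theorem~\ref{main1} only for $n \geq n_0$, not for all $n \geq 4$.

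A minor point: your local solubility step is more complicated than necessary. You do not need to pad Euler's example. The constant tuple $y_{ij} = c$ already solves your homogeneous system, and since the Jacobian there equals $2c$ times the coefficient matrix $C$ (which has full rank $2n+1$), it is a nonsingular solution over $\RR$ and over every $\QQ_p$ for $c$ a unit. The paper exploits exactly this, taking $\mu = n p_0^2$ for a large prime $p_0$ so that $(p_0,\ldots,p_0)$ is a nonsingular local solution everywhere.
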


In fact, we also establish a generalisation of the conjecture for higher powers.
\begin{theorem}\label{main1+}
Let $d \geq 3$.
There is a positive integer $n_0(d)$ such that for every integer $n\geq n_0(d)$, there exists an $n \times n$ magic square of $d^\textnormal{th}$ powers.
\end{theorem}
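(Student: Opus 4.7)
The plan is to apply the Hardy--Littlewood circle method to count integer tuples $(a_{ij})\in\{1,\dots,P\}^{n^2}$ satisfying the linear system (in the $d$-th powers $a_{ij}^d$) defining a magic square. Eliminating the single obvious dependency among the $2n+2$ row-, column-, and diagonal-sum equations yields a full-rank matrix $B\in\ZZ^{(2n+1)\times n^2}$ with columns $\b_{ij}$, and the counting function admits the usual representation
$$
N(P)=\int_{[0,1]^{2n+1}}\prod_{(i,j)}S(\bal\cdot\b_{ij};P)\,\mathrm{d}\bal,\qquad S(\beta;P)=\sum_{1\leq a\leq P}e(\beta a^d).
$$
Solutions with a coincidence $a_{ij}=a_{kl}$ satisfy a system with one fewer free variable and hence number only $O(P^{n^2-d(2n+1)-1})$, so it suffices to prove an asymptotic $N(P)\sim c\,P^{n^2-d(2n+1)}$ with $c>0$.

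The argument then follows the standard major/minor arc dissection. On the major arcs, Weyl-type approximations to each $S(\bal\cdot\b_{ij};P)$ produce the expected main term $\mathfrak{S}\cdot\mathfrak{J}\cdot P^{n^2-d(2n+1)}$, and the positivity of the singular series $\mathfrak{S}$ and singular integral $\mathfrak{J}$ reduces to exhibiting non-singular local solutions over $\RR$ and every $\QQ_p$, which is routine for $n$ large. The decisive input is the minor-arc estimate: via Hölder's inequality combined with a change of variables in $\bal$ that is only available when $\{\b_{ij}\}_{(i,j)\in S}$ is a linearly independent set of size $2n+1$, one can control the minor-arc contribution in terms of $M(n)$, the largest number of pairwise disjoint such subsets among the cells $\{1,\ldots,n\}^2$. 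Standard Weyl-type bounds on each factor then close the circle method provided $M(n)\geq M_0(d)$ for some constant depending only on $d$.

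The crux of the paper therefore reduces to a combinatorial lemma: $M(n)\gg n$. This lower bound is optimal up to the implicit constant since $M(n)\leq\lfloor n^2/(2n+1)\rfloor<n/2$. To prove it, I would exploit the graph-theoretic structure of $B$. Ignoring the two diagonal rows, the column matroid of the remaining matrix is the cycle matroid of the complete bipartite graph $K_{n,n}$, whose bases are its spanning trees of size $2n-1$. A classical theorem of Nash--Williams/Tutte guarantees $\lfloor n/2\rfloor$ edge-disjoint spanning trees in $K_{n,n}$, supplying $\Theta(n)$ pairwise disjoint independent subsets for the reduced problem. To upgrade each such subset to a basis of the full rank-$(2n+1)$ matroid, one augments by a pair of cells --- one on the main diagonal and one on the anti-diagonal --- chosen to preserve linear independence and to use distinct diagonal cells across the $\Theta(n)$ spanning trees. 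Since each diagonal contributes exactly $n$ cells to be distributed among $\Theta(n)$ trees, the pigeonhole budget is tight, and the main technical obstacle is the simultaneous bookkeeping: verifying that an explicit joint construction (for instance using a $1$-factorisation of $K_{n,n}$ or a Latin-square decomposition of the grid) can pick the diagonal augmentations compatibly with the spanning-tree decomposition and in a way that keeps the augmented cells independent from the tree columns. Once $M(n)\gg n$ is established, the circle method yields $N(P)>0$ for every $n\geq n_0(d)$ and all large $P$, completing the proof.
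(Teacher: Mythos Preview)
Your high-level strategy coincides with the paper's: apply the circle method for diagonal systems (in the Br\"udern--Cook framework) with a fixed magic constant $\mu$, reduce the minor-arc bound to the existence of sufficiently many pairwise disjoint bases among the columns of the coefficient matrix, and verify local solubility by taking $\mu=np_0^d$ so that the constant tuple $(p_0,\dots,p_0)$ is a non-singular solution everywhere. The paper packages the analytic input as a black box from a companion note, and the substance lies, exactly as you identify, in the combinatorial lower bound $M(n)\gg n$.

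Where you diverge is in the proof of that bound. Your identification of the truncated $(2n-1)\times n^2$ matrix with the (bipartite, hence unsigned) incidence matrix of $K_{n,n}$, and of its column matroid with the cycle matroid, is correct, and Nash--Williams indeed supplies $\lfloor n/2\rfloor$ edge-disjoint spanning trees. The paper instead works entirely by hand: for each $\ell$ in a range it takes the union of two ``shifted diagonals'' $\{(i,i+2\ell-1)\}_i\cup\{(i,i+2\ell)\}_i$ of the grid (two perfect matchings forming a Hamiltonian cycle, with one edge dropped), swaps out the two cells that happen to lie on the main or anti-diagonal for carefully chosen replacements, and verifies linear independence via several elementary lemmas about an explicit $2n\times 2n$ linear system. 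Their construction is thus a concrete instance of your spanning-tree scheme, executed without any matroid language; it yields the constants $T_0\geq(n-28)/12$ for $n$ even and $(n-37)/14$ for $n$ odd, at the cost of several pages of case analysis.

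Your route is more conceptual, but the step you flag as the ``main technical obstacle'' is genuine and your sketch does not quite close it: if you first take arbitrary edge-disjoint spanning trees and only afterwards try to augment by diagonal and anti-diagonal cells, those $2n$ cells may already be scattered among the trees, and the bookkeeping becomes awkward. The clean fix is to delete the two diagonals from $K_{n,n}$ \emph{before} invoking Nash--Williams; the remaining bipartite graph is $(n-2)$-regular, hence $(n-2)$-edge-connected, so one still obtains $\lfloor(n-2)/2\rfloor$ edge-disjoint spanning trees, none touching a diagonal cell, and each can then be augmented by one fresh main-diagonal cell and one fresh anti-diagonal cell with independence automatic (the last two coordinates vanish on every tree column). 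With this adjustment your argument goes through and in fact gives a better constant than the paper's explicit construction. One small omission: to control the ``coincidence'' counts $N_\sigma$ you also need the disjoint-bases bound for each collapsed matrix $M_\sigma$; the paper handles this by the trivial observation $T_\sigma\geq T_0-2$, which is equally immediate in your framework since identifying two cells destroys at most two of your bases.
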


The novel feature of our work, in comparison to prior work on magic squares in the literature,
is that we are applying the Hardy-Littlewood circle method to a problem, 
for which all previous results had been based on constructive methods (see Remark \ref{remDF} regarding the independent work by Flores \cite{DF}). In particular, our result is non-constructive.
We believe this is the first\footnote{Here we only mean in terms of magic squares with our definition. In a broader sense, as explained in Remark \ref{remDF},
it is second to Flores' paper which appeared on the arXiv on 12/6/2024 while our original version on 13/6/2024.} instance, in the literature, of the circle method being applied to study magic squares. Prior to our result, the existence of $n \times n$ magic squares of $d^\textnormal{th}$ powers was only known for sparse values of $n$ when $d \geq 4$
(for $n = q^d$ with $q \geq 2$ and additionally for small values of $n$ when $4 \leq d \leq 7$), and the existence was also known for $n$ in certain congruence classes when $d \in \{2, 3\}$, while we establish the result for all $n \geq n_0(d)$. We present a more detailed account of the progress on this topic in Section \ref{hist}.

As it can be seen from the proof, the statement of Theorem \ref{main1+} in fact holds with
\begin{eqnarray}
\label{n0002}
n_0(d) = \begin{cases}
  4 \min \{2^d, d (d+1)  \} + 20 & \mbox{if } 3 \leq d \leq 4, \\
  4 \lceil d (\log d + 4.20032) \rceil + 20 & \mbox{if } d \geq 5.
\end{cases}
\end{eqnarray}
It is worth mentioning that the smallest known example of magic squares of $d^\textnormal{th}$ powers was of size $2^d$ for $d \geq 8$;
therefore, we improve on the smallest size of magic squares of $d^\textnormal{th}$ powers known to exist from $2^d$ to $n_0(d)$ as in (\ref{n0002}) for $d \geq 9$.

The main challenge in applying the circle method to the equations defining magic squares is that they  define a
variety which is ``too singular'' for the method to be directly applicable; this is explained in Section \ref{appcircle}.
Though we can not apply the results for systems of general homogeneous forms by Birch \cite{birch} and Rydin Myerson \cite{simon},
we can apply the version available for systems of diagonal forms by Br\"{u}dern and Cook \cite{BC}. However, this result requires certain ``partitionability'' of the coefficient matrix, which amounts to finding a sufficient number of disjoint linearly independent subsets of the columns of the coefficient matrix; establishing this is the
main technical challenge of the paper. We prove an optimal (up to a constant) lower bound for this quantity in Theorem \ref{main2}.
In this context, the circle method allow us to transform the problem of considerable complexity of finding magic squares of
$d^\textnormal{th}$ powers to a theoretically and computationally simpler problem of finding disjoint linearly independent subsets of the columns of the coefficient matrix.
We expect that this lower bound will be particularly useful for studying other variants of magic squares.
We believe that our Theorems \ref{main1} and \ref{main1+} are a significant step towards the complete classification of the existence of $n \times n$ magic squares of $d^\textnormal{th}$ powers.

\subsection{Progress on magic squares of powers}
\label{hist}
An $n \times n$ magic square is a $d$-multimagic square if it remains a magic square when all the entries are raised to the $i^\text{th}$ power for every $i = 2, \ldots, d$. It is called a normal $d$-multimagic square if it is a $d$-multimagic square with entries consisting of the numbers $1$ up to $d^2$. 
Clearly a normal $d$-multimagic square provides an example of a magic square of $d^\text{th}$ powers. The first published $2$-multimagic squares,
which were of sizes $8 \times 8$ and $9 \times 9$, were obtained by Pfeffermann in $1890$. The first $3$-multimagic square was obtained in $1905$ by Tarry.
In fact, Tarry was the first to devise a systematic method of constructing $2$-mulitmagic and $3$-multumagic squares \cite{Keed}.
Prior to our Theorem \ref{main1+}, all known examples of magic squares of $d^\text{th}$ powers for $d \geq 8$ in fact came from multimagic squares.
For $2$-multimagic squares it was proved by Chen, Li, Pan and Xu \cite{CLPX} that there exists a $2m \times 2m$  normal $2$-multimagic squares for all $m \geq 4$.
There is also a result by Hu, Meng, Pan, Su and Xiong \cite{HMPSX} which establishes the existence of $16 m \times 16 m$ normal $3$-multimagic squares for all $m \geq 1$.
We refer the reader to the introductions of \cite{CLPX}, \cite{HMPSX} and \cite{FH} for more detailed history on the progress regarding normal $2$ and $3$-multimagic squares.
The most general result regarding multimagic squares is given by Derksen, Eggermont and van der Essen \cite{DEV} who have proved that there exist
$n \times n$ normal $d$-multimagic squares for any $n = q^d$ with $q \geq 2$. There is also a comprehensive list of various magic squares recorded by Boyer \cite{list}
which includes the following:
\begin{itemize}
  \item  $n \times n$ magic squares of squares for $4 \leq n \leq 7$.
  \item  $n \times n$ $2$-multimagic squares for $8 \leq n \leq 64$.
  \item  number of magic squares of $d^\text{th}$ powers for $2 \leq d \leq 7$.
\end{itemize}
Thus we see that our Theorems \ref{main1} and \ref{main1+} are the first result of this type
and of different nature compared to all the previous results on magic squares in the literature.

\begin{remark}
\label{remDF}
It is important to mention the very nice work by Flores \cite{DF} here. 
It was completely unknown to us that he was simultaneously working on
an adjacent problem until his preprint appeared on the arXiv on 12/6/2024 which prompted us to post our original version on 13/6/2024. 
Flores established the existence of $n \times n$ ``non-trivial'' $d$-multimagic squares for all $n \geq 2 d(d+1)+1$
also using the circle method. Strictly speaking, however, his non-trivial multimagic square is not a magic square in the sense of Conjecture \ref{conj:tva}, because it allows repeated entries; this is an important distinction for us, because for example $3 \times 3$ magic squares of squares with repeated entries are known to exist while the big open problem is regarding the existence of one without. Though it is not dealt with in his paper, there is no doubt that Flores' work can be adapted to deal with multimagic squares without repeated entries as well.
\end{remark}

\subsection{Application of the Hardy-Littlewood circle method}
\label{appcircle}
Let us label the entries of the $n \times n$ magic square of $d^\text{th}$ powers as follows:
\[
\begin{array}{|c|c|c|}
\hline
x_{1,1}^d & \ldots & x_{1,n}^d \\
\hline
\vdots & \vdots & \vdots \\
\hline
x_{n,1}^d & \ldots & x_{n,n}^d \\
\hline
\end{array}\,.\]
Let $\mu$ be a positive integer.
The system of equations defining the $n\times n$ magic square of $d^\textnormal{th}$ powers with magic constant $\mu$ is equivalent to
\begin{eqnarray}\label{eq:msos}
x_{1,1}^d + \cdots + x_{1,n}^d &=& \mu \\
                               &\vdots& \notag \\
x_{n,1}^d + \cdots + x_{n,n}^d &=& \mu\notag\\
x_{1,1}^d + \cdots + x_{n,1}^d &=& \mu\notag\\
                               &\vdots& \notag \\
x_{1,n-1}^d + \cdots + x_{n,n-1}^d &=& \mu\notag\\
x_{1,1}^d + \cdots + x_{n,n}^d &=& \mu \notag\\
x_{1,n}^d + \cdots + x_{n,1}^d &=& \mu, \notag  
\end{eqnarray}
which we denote by $\mathbf{F}_0(\x_0) = \boldsymbol{\mu}$.
A priori the system of equations defining the $n\times n$ magic square of $d^\textnormal{th}$ powers with magic constant $\mu$
requires $2n+2$ equations; however, it can be verified that there is one degree of redundancy
(the $2n-1$ equations corresponding to $n$ rows and $n-1$ columns imply the equation corresponding to the remaining column).
Therefore, this is a system of equations defined by  $R_0 = 2n+1$  degree $d$ homogeneous polynomials in $N_0 = n^2$ variables.
Since the number of variables $N_0$ grows quadratically with $R_0$, it may seem reasonable at first to expect that the known
results regarding the Hardy-Littlewood circle method readily apply to this system. As it turns out, the system (\ref{eq:msos}) is ``too singular'' for this to be the case.

A seminal result by Birch \cite{birch} establishes the existence of a non-trivial integral solution to a general system of equations
$F_1 (\x) = \cdots = F_R(\x) = 0$ defined by $R$ homogeneous polynomials of degree $d$ in $N$ variables. For $d = 2$, this was greatly improved by Rydin Myerson \cite{simon}.
An important quantity in these results is the dimension of the singular locus of the pencil
$$
\sigma_\RR (\mathbf{F}) = \max_{ \boldsymbol{\beta} \in \mathbb{R}^{R} } \dim  \{ \mathbf{x} \in \mathbb{A}^{N}:
\boldsymbol{\beta}.  \nabla \mathbf{F} (\x) = \mathbf{0} \},
$$
where $\mathbf{F} = (F_1, \ldots, F_R)$.
The required bound for $\sigma_\RR(\mathbf{F})$ is
$N > \sigma_\RR(\mathbf{F}) + (d-1) 2^{d-1} R (R+1)$ for Birch's result to be applicable, while $N > \sigma_\RR(\mathbf{F}) + 8 R$ for Rydin Myerson's result.
However, it can be verified that
$$
\sigma_\RR (\mathbf{F}_0) \geq n^2 - n = N_0 - \frac{R_0 - 1}{2},
$$
which is far too large to make use of either of the mentioned results.
Both of these results are for general systems of homogeneous polynomials; however, the system (\ref{eq:msos}) consists only of diagonal polynomials and
there are results in this direction as well. Let us now suppose that $F_1, \ldots, F_R$ are diagonal polynomials.
The system of diagonal equations $F_1 (\x) = \cdots = F_R(\x) = 0$ was first studied by Davenport and Lewis \cite[Lemma 32]{DavenportLewis};
their result required
$$
N \geq \begin{cases} \lfloor 9 R^2 d \log (3 R d) \rfloor & \mbox{if $d$ is odd}, \\
\lfloor 48 R^2 d^3 \log (3 R d^2) \rfloor & \mbox{if $d \geq 4$ is even},
\end{cases}
$$
to establish the existence of a non-trivial integral solution. Since $N_0 = (R_0 - 1)^2/4$, we can not hope to directly apply this result to (\ref{eq:msos}).
By incorporating the breakthrough on Waring's problem by Vaughan \cite{Va89},
Br\"{u}dern and Cook \cite{BC} improved the number of variables required to be
$$
N \geq 2 d R (\log  d + O(\log \log d) ).
$$
We remark that both of these results require a suitable ``rank condition'' on the coefficient matrix.
Since the time of these two papers, there has been great progress regarding Waring's problem
(for example, by Wooley \cite{Woooo92}, \cite{Woo95} and more recently by Wooley and Br\"{u}dern \cite{BruWoo}) and also the resolution of Vinogradov's mean value theorem
(see the work by Bourgain, Demeter and  Guth \cite{BDG}, and by Wooley \cite{WooCub, WooNest}).
By incorporating these recent developments, the required number of variables may be further improved.
Though this technical procedure is mostly standard, we present the details in our separate paper \cite{RomY}
and the result applied to the system (\ref{eq:msos}) is summarised in Theorem \ref{main2}.

\subsection*{Acknowledgements}
NR was supported by FWF project ESP 441-NBL while SY by a FWF grant (DOI 10.55776/P32428).
The authors are grateful to Tim Browning for his constant encouragement and enthusiasm,
J\"{o}rg Br\"{u}dern for very helpful discussion regarding his paper \cite{BC}
and Diyuan Wu for turning the proof of Theorem \ref{rank} in the original version into an algorithm and running the computation for us, for which the results are available in the appendix of the original version. They would also like to thank Christian Boyer for maintaining his website \cite{list} which contains a comprehensive list of various magic squares discovered, Brady Haran and Tony V{\'a}rilly-Alvarado for their public engagement activity of mathematics and magic squares of squares\footnote{A YouTube video ``Magic Squares of Squares (are PROBABLY impossible)'' of the Numberphile channel by Brady Haran, in which Tony V{\'a}rilly-Alvarado appears as a guest speaker: \url{https://www.youtube.com/watch?v=Kdsj84UdeYg}.}, and all the magic squares enthusiasts who have contributed to \cite{list} which made this paper possible. Finally, the authors would like to thank Daniel Flores for his work \cite{DF} which inspired them to optimise the proof of Theorem \ref{rank}
and
Trevor Wooley for very helpful discussion regarding recent developments in Waring's problem and his comments on the original version of this paper.

\subsection*{Notation}
We make use of the standard abbreviations $e(z) = e^{2\pi i z}$ and $e_q(z) = e^{\frac{2 \pi i z}{q}}$.
Given a vector $\a = (a_1, \ldots, a_R) \in \ZZ^R$, by $1 \leq \a \leq q$ we mean $1 \leq a_i \leq q$ for each $1 \leq i \leq R$.

\section{Magic squares of powers}\label{sec:powers}
Let $\mu$ be a positive integer. Let $M_0$ be the coefficient matrix of the system (\ref{eq:msos}), which is given by (\ref{MMM0}).
We define
$$
\mathcal{I} = \{0\} \cup \{ (i_1,j_1;i_2,j_2):  (i_1,j_1) \neq (i_2, j_2), 1 \leq i_1, j_1, i_2, j_2 \leq n \}.
$$
For each $\sigma = (i_1, j_1; i_2, j_2) \in \mathcal{I} \setminus \{ 0 \}$, we denote $\x_\sigma$ to be $\x_0$ with $x_{i_2,j_2}$ removed
and $\mathbf{F}_\sigma (\x_\sigma)  = \boldsymbol{\mu}$ the system of equations obtained
by substituting $x_{i_2, j_2} = x_{i_1, j_1}$ into $\mathbf{F}_0(\x_0) = \boldsymbol{\mu}$.
We then denote by $M_{ \sigma }$ the coefficient matrix of this system, which is obtained from $M_0$
by adding the $((i_2 - 1)n + j_2)$-th column to the $((i_1 - 1)n + j_1)$-th column and then deleting the $((i_2 - 1)n + j_2)$-th column (here
$((i - 1)n + j)$-th column corresponds to the $x_{i,j}$ variable).

For $\sigma \in \mathcal{I}$, $\mathfrak{B} \subseteq \NN$ and  $X \geq 1$, we introduce the following counting function
\begin{align*}
N_\sigma (\mathfrak{B}; X, \mu) = \#\{ \x_\sigma \in (\mathfrak{B} \cap [1, X])^{n^2 - \epsilon (\sigma) }:  \mathbf{F}_\sigma (\x_\sigma) = \boldsymbol{\mu}  \},
\end{align*}
where
$$
\epsilon (\sigma) = \begin{cases}
                      0 & \mbox{if } \sigma = 0, \\
                      1 & \mbox{if } \sigma \in \mathcal{I} \setminus \{ 0 \}.
                    \end{cases}
$$
Then the number of magic squares, whose entries are restricted to $\mathfrak{B}$, with magic constant $\mu$ is given by
\begin{eqnarray}
\label{eqn}
N(\mathfrak{B}; X, \mu) =  N_0(\mathfrak{B}; X, \mu) + O \left(  \sum_{\sigma \in  \mathcal{I} \setminus \{ 0 \} } N_\sigma (\mathfrak{B}; X, \mu) \right).
\end{eqnarray}
In this expression, the sum in the $O$-term is the contribution from squares whose entries are not distinct.
We estimate these counting functions using the Hardy-Littlewood circle method for $\mathfrak{B} = \NN$ and
$$
\mathcal{A}(X, X^{\eta}) = \{ x \in [1, X] \cap \NN: \textnormal{prime } p| x \textnormal{ implies } p \leq X^{\eta}  \}
$$
for sufficiently small $\eta > 0$. The optimal lower bound for $n_0(d)$ in Theorem \ref{main1+} will come from using smooth numbers for most choices of $d$, while  
for $d \in \{2, 3, 4 \}$ it will come from  the natural numbers instead. 

\begin{definition}
\label{TTT}
Let $\sigma \in \mathcal{I}$. We define $\Psi ({M_\sigma})$ to be the largest integer $T$ such that there exists
$$
\{ \mathfrak{D}_1, \ldots, \mathfrak{D}_T \},
$$
where each $\mathfrak{D}_i$ is a linearly independent set of $2n+1$ columns of $M_\sigma$ and $\mathfrak{D}_i \cap \mathfrak{D}_j \neq \emptyset$ if $i \neq j$.
We let $T_\sigma$ be a non-negative integer such that
$$
\Psi ({M_\sigma}) \geq T_{\sigma}.
$$
\end{definition}

By applying the circle method to the system of diagonal equations (\ref{eq:msos}),
we obtain the following as a direct consequence of \cite[Theorems 1.4 and 1.5]{RomY}.
Here, we present a simplified version of the result (see \cite[Tables 1 and 2]{RomY} for more accurate values and also \cite[Remark 2.3]{RomY}).
\begin{theorem}
\label{main2}
Let $\mathfrak{B} = \NN$ or $\mathcal{A}(X, X^{\eta})$ with $\eta > 0$ sufficiently small.
Suppose
$$
\min_{\sigma \in \mathcal{I}} T_\sigma \geq
\begin{cases}
  \min \{ 2^d, d (d+1) \} +  1  & \mbox{if } 2 \leq d \leq 4  \mbox{ and }\mathfrak{B} = \NN,\\
  \lceil d (\log d + 4.20032) \rceil +  1 & \mbox{if } d \geq 5 \mbox{ and } \mathfrak{B} = \mathcal{A}(X, X^{\eta}) .
\end{cases}
$$
Then there exists $\lambda > 0$ such that
$$
N_0(\mathfrak{B}; X, \mu) = C_\mathfrak{B} \mathfrak{S} \mathfrak{I} X^{n^2 - (2n+1)d} + O(  X^{n^2  - (2n+1)d}  (\log X)^{- \lambda} ),
$$
where
$$
C_\mathfrak{B}
= \begin{cases}
    1 & \mbox{if } \mathfrak{B} = \NN, \\
    c(\eta) & \mbox{if } \mathfrak{B} = \mathcal{A}(X, X^{\eta}),
  \end{cases}
$$
$c(\eta) > 0$ is a constant depending only on $\eta$, the singular series $\mathfrak{S}$ and the singular integral $\mathfrak{I}$ are defined in (\ref{defSS}) and (\ref{defSI}), respectively.
Furthermore,
$$
\max_{\sigma \in \mathcal{I} \setminus \{0\} }  N_\sigma (\mathfrak{B}; X, \mu) \ll  X^{n^2 - (2n+1)d - 1}.
$$
\end{theorem}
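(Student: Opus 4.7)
The plan is to read Theorem \ref{main2} as a direct application of the two general results for systems of diagonal forms proved in our companion paper \cite{RomY}. Both the ambient system $\mathbf{F}_0(\x_0) = \boldsymbol{\mu}$ and each collapsed system $\mathbf{F}_\sigma(\x_\sigma) = \boldsymbol{\mu}$ (obtained by setting $x_{i_2,j_2} = x_{i_1,j_1}$) are systems of $R = 2n+1$ diagonal forms of degree $d$, in $N_0 = n^2$ and $N_0 - 1 = n^2 - 1$ variables respectively, with coefficient matrices $M_0$ and $M_\sigma$. Consequently, \cite[Theorems 1 and 2]{RomY} apply in exactly the form needed, and the proof reduces to matching hypotheses and exponents.

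For the asymptotic formula for $N_0(\mathfrak{B}; X, \mu)$, I would apply \cite[Theorem 1]{RomY} with $N = n^2$ and $R = 2n+1$. The quantitative hypothesis in that theorem is phrased in terms of a partitionability parameter of the coefficient matrix, which by Definition \ref{TTT} is precisely $\Psi(M_0) \geq T_0$; the explicit numerical thresholds (namely $\min\{2^d, d(d+1)\} + 1$ for $2 \leq d < 19$ with $\mathfrak{B} = \NN$, and $\lceil d(\log d + 4.20032) \rceil + 1$ for $d \geq 20$ with $\mathfrak{B} = \mathcal{A}(X, X^\eta)$) are what is needed to feed the best currently available mean value estimates---Vaughan's work \cite{Va89} and the resolution of Vinogradov's mean value theorem via efficient congruencing and decoupling \cite{BDG, WooCub, WooNest}---into the standard Hardy--Littlewood dissection. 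The predicted main term takes the shape $\mathfrak{S}\mathfrak{I} X^{N - dR}$, which in our setting is exactly $\mathfrak{S}\mathfrak{I} X^{n^2 - (2n+1)d}$, with $\mathfrak{S}$ and $\mathfrak{I}$ as in (\ref{defSS}) and (\ref{defSI}). Working with $\mathfrak{B} = \mathcal{A}(X, X^\eta)$ rather than $\mathfrak{B} = \NN$ amounts to replacing the complete exponential sum on the minor arcs by a smooth Weyl sum \`a la Vaughan; this modification is also implemented in \cite{RomY} and is the source of the factor $C_\mathfrak{B} = c(\eta) > 0$.

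For the upper bound on $N_\sigma(\mathfrak{B}; X, \mu)$ when $\sigma \in \mathcal{I} \setminus \{0\}$, I would apply \cite[Theorem 2]{RomY} to the system $\mathbf{F}_\sigma(\x_\sigma) = \boldsymbol{\mu}$, which has $n^2 - 1$ variables and coefficient matrix $M_\sigma$. The partitionability hypothesis $\Psi(M_\sigma) \geq T_\sigma$ again matches the assumption of the running theorem, and the resulting upper bound is $O\bigl(X^{(n^2 - 1) - (2n+1)d}\bigr) = O\bigl(X^{n^2 - (2n+1)d - 1}\bigr)$, as claimed.

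The main obstacle in turning Theorem \ref{main2} into a usable tool is therefore not the analytic input---that is entirely encapsulated in \cite{RomY}---but the verification of its hypothesis: one has to produce a lower bound on $T_\sigma$ that holds uniformly across $\sigma \in \mathcal{I}$, including the pairs $\sigma \neq 0$ encoding coincidences of two magic-square entries. This is exactly the combinatorial and linear-algebraic problem of lower bounding $\Psi(M_\sigma)$, whose resolution is the main technical contribution of the present paper and is the focus of the subsequent sections.
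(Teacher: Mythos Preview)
Your proposal is correct and matches the paper's own treatment: the paper states Theorem~\ref{main2} ``as a direct consequence of \cite[Theorems 1 and 2]{RomY}'' with no further argument, and you have correctly identified how to specialise those results to the systems $\mathbf{F}_0$ and $\mathbf{F}_\sigma$ with $R=2n+1$ and $N=n^2$ or $n^2-1$. Your additional commentary on the origin of the numerical thresholds and the role of smooth Weyl sums is accurate background but goes beyond what the paper itself provides here.
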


The next step is to establish lower bounds for $\mathfrak{S}$ and $\mathfrak{I}$ which are independent of $X$.
We present the details of the following proposition in Section \ref{prop:SS}.
\begin{proposition}
\label{SSSS}
Given any $X \geq 1$ sufficiently large, there exists $\mu = \mu(X) \in \NN$ such that
$$
\mathfrak{S} \mathfrak{I} > \mathfrak{c}_{n, d},
$$
where $\mathfrak{c}_{n, d} > 0$ is a constant depending only on $n$ and $d$.
\end{proposition}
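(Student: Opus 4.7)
The plan is to take $\mu = \mu(X) := n c^d$ where $c = c(X)$ is a positive integer in $[X/3, X/2]$ coprime to an auxiliary integer $Q = Q(n,d)$ to be specified. A standard counting argument shows that the number of such $c$ is $\gg_{n,d} X$, so the choice is possible for $X$ large enough. With this $\mu$, the strategy is to bound $\mathfrak{I}$ and $\mathfrak{S}$ separately from below by positive constants depending only on $n$ and $d$, and take $\mathfrak{c}_{n,d}$ to be their product.

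For the singular integral, the rescaling $\y = \x/X$ built into the normalisation of $\mathfrak{I}$ in Theorem~\ref{main2} reduces the lower bound to controlling the $(N_0 - R_0)$-dimensional measure of the real level set $\{\y \in (0,1)^{N_0} : \mathbf{F}_0(\y) = n(c/X)^d \mathbf{1}\}$, in the usual sense afforded by Fourier inversion. The constant vector $\y_0 = (c/X)\mathbf{1}$ solves the system, lies in $(0,1)^{N_0}$ since $c/X \in [1/3, 1/2]$, and has Jacobian $d(c/X)^{d-1} M_0$ of maximal rank $R_0$: the hypothesis $T_0 \geq 1$ implicit in Theorem~\ref{main2} forces $M_0$ to have rank $R_0$ over $\QQ$. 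The implicit function theorem then carves out a smooth patch of the level set with positive $(N_0 - R_0)$-dimensional measure inside $(0,1)^{N_0}$, and the patch size is bounded below uniformly as $c/X$ varies over the compact interval $[1/3, 1/2]$. This yields $\mathfrak{I} \gg_{n,d} 1$.

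For the singular series we factor $\mathfrak{S} = \prod_p \mathfrak{S}_p$ and treat large and small primes separately. For primes $p$ larger than a threshold $P_0 = P_0(n,d)$, the partitionability hypothesis of Theorem~\ref{main2} feeds into the standard Weyl-type estimates for the exponential sums of the diagonal system~(\ref{eq:msos}) to yield $\mathfrak{S}_p = 1 + O_{n,d}(p^{-1-\delta})$ for some $\delta > 0$, so $\prod_{p > P_0} \mathfrak{S}_p$ converges to a positive number bounded below in terms of $n$ and $d$. For each prime $p \leq P_0$, we need a non-singular $p$-adic solution of $\mathbf{F}_0(\x) = \boldsymbol{\mu}$. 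Taking $Q$ to be the product of $d$, all primes $p \leq P_0$, and the g.c.d.\ of the $R_0 \times R_0$ minors of $M_0$, the coprimality $\gcd(c,Q) = 1$ ensures that for every $p \leq P_0$ with $p \nmid d$ at which $M_0$ retains rank $R_0$ modulo $p$, the constant solution $(c,\ldots,c)$ has invertible Jacobian $dc^{d-1} M_0 \pmod p$, and Hensel's lemma lifts it to a non-singular $p$-adic solution, giving $\mathfrak{S}_p > 0$.

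The main technical obstacle will be the handful of exceptional primes $p \leq P_0$ that either divide $d$ or at which $M_0$ drops rank modulo $p$, since the constant solution's Jacobian is then degenerate mod $p$. For these we plan to exploit the partitionability hypothesis $T_0 \gg_{n,d} 1$ more fully: among the $T_0$ disjoint $\QQ$-independent $(2n+1)$-subsets of columns of $M_0$, a pigeonhole argument produces at least one that remains $\FF_p$-independent, on which we build a non-constant $p$-adic solution with non-singular Jacobian; for $p \mid d$ a modified Hensel lift starting from a solution modulo a fixed power $p^{e(p,d)}$ handles the vanishing of the $d$-derivative. The resulting congruence conditions on $\mu$ are imposed on $c$ by the Chinese Remainder Theorem, restricting $c$ to a fixed arithmetic progression modulo a divisor of a fixed integer $Q^{e}$ depending only on $n,d$, which still leaves $\gg_{n,d} X$ admissible values of $c$ in $[X/3, X/2]$. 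Multiplying all local factors produces $\mathfrak{S} \gg_{n,d} 1$, and combining with the singular integral bound yields $\mathfrak{S}\mathfrak{I} > \mathfrak{c}_{n,d}$ for a constant $\mathfrak{c}_{n,d} > 0$ depending only on $n$ and $d$.
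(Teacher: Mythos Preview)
Your overall strategy---choose $\mu = n c^d$ with $c \asymp X$ and use the constant vector $(c,\ldots,c)$ as a witness for both the real and $p$-adic densities---is exactly the paper's approach (there with $c = p_0$ a large prime), and your treatment of the singular integral matches the paper's Lemma~\ref{intbdd} essentially verbatim.

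The issue is in your handling of the singular series at small primes. You split into ``good'' primes (where $p \nmid d$ and $M_0$ has full rank over $\FF_p$) and ``exceptional'' primes, and for the latter you propose a pigeonhole argument: among the $T_0$ disjoint $\QQ$-independent $(2n+1)$-subsets of columns, one should remain $\FF_p$-independent. This pigeonhole step is not valid as stated---if $M_0$ genuinely drops rank over $\FF_p$ then \emph{no} set of $2n+1$ columns can be $\FF_p$-independent, regardless of how many disjoint $\QQ$-independent sets you have---so the construction of a non-constant $p$-adic solution from it is unsupported.

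The good news is that this entire case distinction is unnecessary, and the paper avoids it. The key observation you are missing is that $(c,\ldots,c)$ is not merely a solution modulo $p$ to be lifted: it is an \emph{exact} integer solution of $\mathbf{F}_0(\x_0) = \boldsymbol{\mu}$, hence already a $\ZZ_p$-point for every prime $p$. Its Jacobian $d c^{d-1} M_0$ has full rank over $\QQ_p$ for every $p$ (since $c$ is a $p$-adic unit for $p \leq P_0$ and $M_0$ has full rank over $\QQ$), so it is a non-singular $\QQ_p$-point of the variety. The quantitative Hensel lemma then gives $\chi(p) \geq p^{-e(p)(n^2 - 2n - 1)}$ for some exponent $e(p)$ depending only on the $p$-adic valuation of $d$ and of a fixed nonzero minor of $M_0$, hence only on $n$ and $d$. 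No separate argument for primes dividing $d$ or the minors of $M_0$ is required; they are absorbed into this exponent. Dropping the pigeonhole paragraph and replacing it with this observation gives a complete proof along the paper's lines.
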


Finally, in order to establish the existence of magic squares of $d^\textnormal{th}$ powers with magic constant $\mu$, it remains to establish a lower bound for $T_\sigma$.
The following is the main technical result of the paper, which we prove through Sections \ref{sec:linear}--\ref{Even}.
\begin{theorem}
\label{rank}
Let $n \geq 8$. Then
$$
T_0 \geq \left \lfloor \frac{n}{4} \right \rfloor - 1.
$$
\end{theorem}

By combining these results collected in this section, we obtain Theorem \ref{main1+} as an immediate consequence.
\begin{proof}[Proof of Theorem \ref{main1+}]
It follows from Theorem \ref{rank} that
$$
\min_{\sigma \in \mathcal{I} \setminus \{ 0 \} } T_\sigma \geq T_0 - 2 \geq \left \lfloor \frac{n}{4} \right \rfloor - 3.
$$
Therefore, by combining this estimate, Theorem \ref{main2} and Proposition \ref{SSSS} with (\ref{eqn}), we obtain that
Theorem \ref{main1+} holds with
\begin{eqnarray}
\label{n000}
n_0(d) = \begin{cases}
  4 \min \{2^d, d (d+1)  \} + 20 & \mbox{if $2 \leq d \leq 4$,} \\
  4 \lceil d (\log d + 4.20032) \rceil + 20 & \mbox{if $d \geq 5$.}
\end{cases}
\end{eqnarray}
\end{proof}

In order to establish Theorem \ref{main1} we need to deal with smaller values of $n$.
\begin{proof}[Proof of Theorem \ref{main1}]
It follows from the proof of Theorem \ref{main1+} above that
there exists an $n \times n$ magic square of squares as soon as $n \geq 36$.
Since the statement is already known for $4 \leq n \leq 64$, in fact explicit examples have been discovered and listed in \cite{list}, this completes the proof.
\end{proof}

\section{Singular series and singular integral: Proof of Proposition \ref{SSSS}}
\label{prop:SS}
We let $\textnormal{Col}(M_0)$ denote the set of columns of $M_0$.
Let $\textnormal{Jac}_{\mathbf{F}_0}$ denote the Jacobian matrix of $\mathbf{F}_0$
and $\mathbb{F} = \RR$ or $\QQ_p$ for any prime $p$.
A crucial fact we make use of in this section is that given any $z \in \mathbb{F} \setminus \{0\}$, we have
$$
\textnormal{Jac}_{\mathbf{F}_0} (z, \ldots, z) = d z^{d-1} M_0,
$$
which in particular is of full rank. Therefore, if $\mathbf{F}_0 ( z, \ldots, z ) = \boldsymbol{\mu}$,
then it is in fact a non-singular solution.

Let
$$
S(q,a) =  \sum_{1 \leq x \leq q} e_q (a x^d).
$$
We define the singular series
\begin{eqnarray}
\label{defSS}
\mathfrak{S}  = \sum_{q =1}^\infty A (q),
\end{eqnarray}
where
$$
A (q) = q^{- n^2  }  \sum_{ \substack{ 1 \leq  \a \leq  q \\ \gcd(q, \a) = 1  } } \prod_{\c \in \textnormal{Col}( M_0 ) } S(q, \a.\c) \cdot  e_q \left(  -  \mu \sum_{i = 1}^{2n + 1} a_i \right).
$$
We have the following lemma regarding $\mathfrak{S}$  which is \cite[Lemmas 4.1 and 4.2]{RomY}.
\begin{lemma}
Suppose
$$
T_0 > \frac{d (2n+2)}{2n + 1}.
$$
Then
\begin{eqnarray}
\label{3.1}
A(q)  \ll  q^{-  (2n+1)( \frac{T_0}{d} - 1  )}, 
\end{eqnarray}
where the implicit constant is independent of $\mu$,  
and the series (\ref{SSSS}) converges absolutely. In fact,
$$
\mathfrak{S} = \prod_{p \textnormal{ prime} } \chi(p),
$$
where
$$
\chi(p) = 1 + \sum_{k = 1}^{\infty} A (p^k).
$$
\end{lemma}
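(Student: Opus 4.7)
The plan is to prove the pointwise bound (\ref{3.1}) first, from which absolute convergence of $\mathfrak{S}$ follows directly from the exponent exceeding $1$; the Euler product factorization is then a standard consequence of the multiplicativity of $A(q)$. The heart of the argument is using the partitionability hypothesis to apply H\"older's inequality and reduce the exponential sum estimate to standard complete $d$-th power Weyl bounds.

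For (\ref{3.1}) I would proceed as follows. The hypothesis on $T_0$ furnishes pairwise disjoint linearly independent subsets $\mathfrak{D}_1, \ldots, \mathfrak{D}_{T_0}$ of the columns of $M_0$, each of cardinality $2n+1$. Taking absolute values in $A(q)$ and applying the trivial bound $|S(q,b)| \leq q$ on the $n^2 - (2n+1)T_0$ columns outside $\bigcup_k \mathfrak{D}_k$ cancels the $q^{-n^2}$ prefactor down to $q^{-(2n+1)T_0}$. H\"older's inequality then splits the sum over $\a$ across the $T_0$ groups,
$$
\sum_{\a} \prod_{k=1}^{T_0} \prod_{\c \in \mathfrak{D}_k} |S(q, \a \cdot \c)| \leq \prod_{k=1}^{T_0} \left( \sum_{\a} \prod_{\c \in \mathfrak{D}_k} |S(q, \a \cdot \c)|^{T_0} \right)^{1/T_0},
$$
and for each $k$ the linear independence of $\mathfrak{D}_k$ makes the map $\a \mapsto (\a \cdot \c)_{\c \in \mathfrak{D}_k}$ a non-degenerate linear change of variables modulo $q$, reducing each inner sum to $\bigl(\sum_b |S(q,b)|^{T_0}\bigr)^{2n+1}$ up to a bounded determinant factor. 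The classical bound $|S(q,a)| \ll q^{1-1/d}$ for $\gcd(a,q) = 1$ (combined with the trivial bound on degenerate residues) gives $\sum_b |S(q,b)|^{T_0} \ll q^{1+T_0(1-1/d)}$, and threading everything together yields
$$
|A(q)| \ll q^{-(2n+1)T_0 + (2n+1)(1 + T_0(1 - 1/d))} = q^{-(2n+1)(T_0/d - 1)},
$$
which is (\ref{3.1}). Absolute convergence of $\mathfrak{S}$ is then immediate since the hypothesis $T_0 > d(2n+2)/(2n+1)$ makes the exponent strictly greater than $1$. The Euler product follows by standard CRT arguments: the complete sum obeys the twisted multiplicativity $S(q_1q_2, a) = S(q_1, a\bar{q}_2^d)\, S(q_2, a\bar{q}_1^d)$, which together with the bijection between coprime residue classes modulo $q_1 q_2$ and pairs thereof modulo $q_1$ and $q_2$ proves $A(q_1 q_2) = A(q_1) A(q_2)$, and absolute convergence lets us rearrange $\sum_q A(q) = \prod_p\bigl(1 + \sum_{k\geq 1} A(p^k)\bigr)$.

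The main technical delicacy is the change-of-variables step: linear independence over $\QQ$ does not automatically translate into invertibility modulo an arbitrary $q$, since the $(2n+1) \times (2n+1)$ minor of $M_0$ corresponding to $\mathfrak{D}_k$ may share prime factors with $q$. However, $M_0$ has 0-1 entries dictated by the magic square structure, so these minors are bounded integers independent of $q$, and the corresponding shortfall amounts only to an $O(1)$ factor that is absorbed into the implied constant. The uniform $p$-adic treatment, as well as a more refined Hua-type estimate for $\sum_b |S(q,b)|^{T_0}$ that avoids any $q^{\epsilon}$ loss, is carried out cleanly in the companion paper \cite{RomY} from which this lemma is directly quoted.
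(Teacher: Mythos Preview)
The paper does not prove this lemma in the text; it is quoted directly from the companion paper \cite{RomY}. Your overall architecture --- H\"older across the $T_0$ blocks, then a linear change of variables inside each block --- is indeed the Br\"udern--Cook template, and the multiplicativity argument for the Euler product is fine. But the core estimate has a genuine gap.

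The step that fails is the claim
\[
\sum_{b \bmod q} |S(q,b)|^{T_0} \ll q^{1 + T_0(1-1/d)}.
\]
This is false whenever $T_0 > d$, which is exactly the regime you are in. The single term $b \equiv 0$ already contributes $|S(q,0)|^{T_0} = q^{T_0}$, and $q^{T_0} > q^{1+T_0(1-1/d)}$ precisely when $T_0/d > 1$. More generally, for $q = p^h$ and $v_p(b) = j$ one has $|S(q,b)| = p^{j}\,|S(p^{h-j},b')| \ll p^{h(1-1/d)+j/d}$, so the level-$j$ contribution to the moment is of order $p^{h(1+T_0(1-1/d))}\cdot p^{\,j(T_0/d-1)}$, which is \emph{increasing} in $j$. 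The ``degenerate residues'' you dismiss with the trivial bound are not a lower-order correction; they are the dominant term.

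The underlying tension is this: your factorisation $\bigl(\sum_b |S(q,b)|^{T_0}\bigr)^{2n+1}$ is only valid after the change of variables if you drop the restriction $\gcd(\a,q)=1$. But that restriction is exactly what kills the degenerate $\b$'s. With the restriction kept, the condition transported through an invertible $M_k$ becomes $\gcd(\b,q)=1$, i.e.\ \emph{at least one} coordinate $b_i$ is a unit --- a coupled condition that does not split as a product over $i$. If one carries this through honestly (stratifying by $\min_i v_p(b_i)=0$), the H\"older route yields only $A(q)\ll q^{-(T_0/d-1)}$, missing the factor $2n+1$ in the exponent of the stated lemma and hence not matching the hypothesis $T_0 > d(2n+2)/(2n+1)$. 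Reaching the stronger bound requires the more delicate treatment carried out in \cite{RomY}; your sketch does not supply the missing idea, and the sentence invoking ``the trivial bound on degenerate residues'' papers over the actual difficulty.
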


We establish the desired lower bound for $\mathfrak{S}$ for special values of $\mu$.
\begin{lemma}
\label{serbdd}
Let $\mu = n p_0^d$, where $p_0$ is a prime number sufficiently large with respect to $n$ and $d$. Then
$\mathfrak{S}_0  > c_{n, d}$, where $c_{n, d} > 0 $ is a constant depending only on $n$ and $d$.
\end{lemma}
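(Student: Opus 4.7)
The plan is to decompose $\mathfrak{S} = \prod_p \chi(p)$ (the Euler product from the preceding lemma) and bound the contributions of large and small primes separately.

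\emph{Large primes.} The standing hypothesis $T_0 > d(2n+2)/(2n+1)$ forces $\kappa := (2n+1)(T_0/d - 1) > 1$, so (\ref{3.1}) yields
$$
|\chi(p) - 1| \leq \sum_{k \geq 1}|A(p^k)| \ll \frac{p^{-\kappa}}{1 - p^{-\kappa}} \ll p^{-\kappa}.
$$
Since $\kappa > 1$, the sum $\sum_p p^{-\kappa}$ over primes converges, so we may fix $P_0 = P_0(n, d)$ with $\prod_{p > P_0}\chi(p) \geq 1/2$.

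\emph{Small primes via the diagonal.} Choose $p_0$ prime with $p_0 > P_0$ and let $\Pi = \Pi(n, d)$ denote the finite set of primes dividing $d$ together with the primes dividing the gcd of all $(2n+1)\times(2n+1)$ minors of $M_0$. By the standard interpretation of $\chi(p)$ as a $p$-adic density of solutions, a non-singular $\FF_p$-solution to $\mathbf{F}_0(\x) = \boldsymbol{\mu}$ combined with Hensel's lemma yields $\chi(p) \geq p^{-(n^2 - (2n+1))}$. For each prime $p \leq P_0$ with $p \notin \Pi$ (in particular $p \neq p_0$), the identity $\mathbf{F}_0(z,\ldots,z) = nz^d\mathbf{1}$ with $z = p_0$ exhibits the diagonal point as an $\FF_p$-solution to $\mathbf{F}_0 = \boldsymbol{\mu}$; its Jacobian $d p_0^{d-1} M_0$ has rank $2n+1$ over $\FF_p$, since the scalar $dp_0^{d-1}$ is a unit and $M_0$ has full row rank modulo $p$ by the choice of $\Pi$. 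Hence the diagonal point is non-singular and provides the desired bound.

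\emph{Main obstacle: primes in $\Pi$.} For the finitely many primes $p \in \Pi$ the diagonal Jacobian is degenerate modulo $p$, so a non-singular $\ZZ_p$-solution has to be produced by another method; this is the main technical step. Exploiting the excess $n^2 - (2n+1)$ of variables over equations, the plan is to set all but $O(1)$ entries of $\x$ equal to $p_0$ and solve the residual small system for the free entries by an explicit local construction (for example, pairing entries and using a $d$-th power analogue of the four-square identity to adjust the affected row, column, and diagonal sums modulo $p^k$ while keeping the restricted Jacobian non-degenerate). Since $\Pi$ depends only on $n$ and $d$, each $p \in \Pi$ yields $\chi(p) \geq \epsilon_p(n,d) > 0$. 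Combining all three parts,
$$
\mathfrak{S} \geq \frac{1}{2}\prod_{p \leq P_0,\, p \notin \Pi} p^{-(n^2 - (2n+1))}\prod_{p \in \Pi}\epsilon_p(n,d) =: c_{n,d} > 0,
$$
with $c_{n,d}$ depending only on $n$ and $d$, as required.
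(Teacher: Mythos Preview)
Your treatment of the large primes matches the paper's. The gap is in your handling of the set $\Pi$: you assert that for $p\in\Pi$ a non-singular $\ZZ_p$-solution must be produced by some other construction, and then give only a vague plan involving unspecified local adjustments that you do not carry out.

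This step is both incomplete and unnecessary. The point you are missing is that $(p_0,\ldots,p_0)$ is not merely a solution modulo $p$; it is an \emph{exact} integer solution of $\mathbf{F}_0(\x_0)=\boldsymbol{\mu}$, and its Jacobian $dp_0^{d-1}M_0$ has full rank over $\QQ_p$ for every $p\leq P_0$ simply because $M_0$ has full rank over $\QQ$ and $p_0$ is a $p$-adic unit. You do not need full rank over $\FF_p$. The form of Hensel's lemma appropriate to an exact $\ZZ_p$-point whose Jacobian minor has $p$-adic valuation $e$ yields $\nu(p^m)\geq p^{(m-c_e)(n^2-2n-1)}$ for some $c_e$ depending only on $e$, hence $\chi(p)>0$ with a lower bound depending only on $n$, $d$ and $p$, and in particular not on $p_0$. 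The paper runs exactly this argument uniformly over all $p\leq P$ with no case split, taking the exponent $\nu_p(d)+1$ in place of your $1$, and reads off $\chi(p)\geq p^{-(\nu_p(d)+1)(n^2-2n-1)}$ directly from the diagonal point.

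So the correction is to drop $\Pi$ entirely: the diagonal point already serves as a smooth $\ZZ_p$-point for every small prime, and one invokes the version of Hensel that tolerates the Jacobian vanishing modulo $p$ to bounded order rather than attempting an ad hoc construction of a different solution.
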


\begin{proof}
An application of (\ref{3.1}) yields
$$
|\chi(p) - 1| \ll \sum_{k = 1}^{\infty} p^{- k (2n + 1) (\frac{T_0}{d} - 1)} \ll p^{-  (2n + 1) (\frac{T_0}{d} - 1)}.
$$
Therefore, there exists $P > 0$ such that
\[
\prod_{p > P} \chi(p) > \frac{1}{2}.
\]
We remark that $P$ is independent of $\mu$.
The standard argument shows that
\[
\chi(p) = \lim_{m \rightarrow \infty} \frac{\nu_\mu(p^m)}{p^{m(n^2 -2n-1)}},
\]
where $\nu_\mu(p^m)$ is the number of solutions to the congruence $\mathbf{F}_0 (\x_0) \equiv \boldsymbol{\mu} \bmod{ p^m }$.
Since $\mu = n p_0^d$ with prime $p_0$ sufficiently large and $p_0$ is invertible modulo $p$ for $p \leq P$, it follows that
$$
\nu_\mu(p^m) = \nu_n(p^m).
$$
We know that $\x_0 = (1, \ldots, 1) \in \ZZ_p^{n^2}$ is a non-singular solution to
the system of equations $\mathbf{F}_0 (\x_0) = \boldsymbol{n}$.
Thus it follows from Hensel's lemma that
$$
\chi(p) = \lim_{m \rightarrow \infty} \frac{  \nu_\mu(p^m)  }{p^{m(n^2-2n-1)}}
=
\lim_{m \rightarrow \infty} \frac{  \lambda_n(p^m)  }{p^{m(n^2-2n-1)}}  > 0
$$
for $p \leq P$.
Therefore, we obtain
$$
\mathfrak{S}  >  \frac12  \prod_{p \leq P}   \chi(p)   \gg 1,
$$
where the implicit constant is independent of $\mu$.
 \end{proof}

Let
$$
I(\beta)
= \int_0^1 e(\beta \xi^d) \mathrm{d}\xi.
$$
We define the singular integral
\begin{eqnarray}
\label{defSI}
\mathfrak{I} =  \int_{\RR^{2n + 1}}   \prod_{ \c \in \textnormal{Col}(M_0) } I(\bgamma.\c)  \cdot  e \left(   -  \frac{\mu}{X^d} \sum_{i = 1}^{2n + 1} \gamma_i \right)  \mathrm{d} \bgamma.
\end{eqnarray}

We have the following lemma regarding $\mathfrak{I}$  which is \cite[Lemmas 4.3]{RomY}
\begin{lemma}\label{lem:singint}
Suppose $T_0 > d$. Then the integral (\ref{defSI}) converges absolutely.
\end{lemma}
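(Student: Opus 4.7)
The plan is the standard singular-integral convergence argument from the circle method: combine the elementary one-dimensional decay estimate $|I(\beta)| \ll \min(1,|\beta|^{-1/d})$ with a H\"older factorisation made possible by the pairwise disjoint linearly independent subsets of columns counted by $T_0$.

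First I would bound $|\mathfrak{I}|$ by discarding the unimodular exponential factor and applying the trivial bound $|I(\bgamma.\c)| \leq 1$ to every column outside a fixed collection $\mathfrak{D}_1, \ldots, \mathfrak{D}_{T_0}$ of pairwise disjoint linearly independent subsets of $\textnormal{Col}(M_0)$, each of size $2n+1$ (hence a basis of $\RR^{2n+1}$), as furnished by Definition \ref{TTT}. This yields
$$
|\mathfrak{I}| \leq \int_{\RR^{2n+1}} \prod_{i=1}^{T_0} F_i(\bgamma)\, \mathrm{d}\bgamma, \qquad F_i(\bgamma) = \prod_{\c \in \mathfrak{D}_i} |I(\bgamma.\c)|.
$$

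Next I would invoke the generalised H\"older inequality with $T_0$ equal exponents, each equal to $T_0$, reducing matters to estimating $\int_{\RR^{2n+1}} F_i(\bgamma)^{T_0}\, \mathrm{d}\bgamma$ for every $i$. Since $\mathfrak{D}_i$ is a basis of $\RR^{2n+1}$, the linear substitution $\bgamma \mapsto (\bgamma.\c)_{\c \in \mathfrak{D}_i}$ has nonzero Jacobian and the integrand decouples into a product of $2n+1$ one-variable integrals of the form
$$
\int_{\RR} |I(u)|^{T_0}\, \mathrm{d}u \ll 1 + \int_{|u|>1} |u|^{-T_0/d}\, \mathrm{d}u,
$$
which converges precisely when $T_0 > d$. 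Feeding this back yields the absolute convergence of $\mathfrak{I}$.

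The only ingredient that is not immediate is the pointwise decay bound $|I(\beta)| \ll \min(1,|\beta|^{-1/d})$, but this is a standard consequence of the substitution $t = \xi^d$ followed by integration by parts (or of a direct van der Corput estimate) and presents no real obstacle. The hypothesis $T_0 > d$ is used exactly once, to guarantee integrability of $|I(u)|^{T_0}$ at infinity, so it is sharp for this argument; the main conceptual point is simply to arrange the columns into bases of $\RR^{2n+1}$ so that H\"older reduces the problem to this one-dimensional question.
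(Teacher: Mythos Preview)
Your argument is correct and is precisely the standard proof of this fact for diagonal systems: discard the unimodular factor, use $|I(\beta)|\ll\min(1,|\beta|^{-1/d})$, apply H\"older across the $T_0$ disjoint bases $\mathfrak{D}_1,\ldots,\mathfrak{D}_{T_0}$, and decouple via a linear change of variables. The paper does not actually supply its own proof here but simply invokes \cite[Lemma 3.3]{RomY}; the argument there is exactly the one you have written, so there is nothing to add.
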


We establish the desired lower bound for $\mathfrak{I}$ for special values of $\mu$.
\begin{lemma}
\label{intbdd}
Let $\varepsilon_0 > 0$ be sufficiently small. Suppose $\mu = n \zeta^d X^d$ with $\zeta \in [\varepsilon_0, 1 - \varepsilon_0]$.
Then $\mathfrak{I} > c_{n, d, \varepsilon_0}$, where $c_{n, d, \varepsilon_0} > 0$ is a constant depending only on $n, d$ and $\varepsilon_0$.
\end{lemma}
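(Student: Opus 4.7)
The plan is to interpret $\mathfrak{I}$ as a real density of solutions near the diagonal point $\boldsymbol{\xi}^{*} = (\zeta, \ldots, \zeta)$ and to extract positivity via the implicit function theorem. Since Lemma \ref{lem:singint} gives absolute convergence of $\mathfrak{I}$, a standard Fourier-inversion argument (smoothing the indicator of $[-\delta,\delta]^{2n+1}$, applying Fubini, then using dominated convergence as $\delta \to 0^{+}$) yields the identity
\[
\mathfrak{I} = \lim_{\delta \to 0^{+}} \frac{1}{(2\delta)^{2n+1}} \operatorname{vol} \left\{ \boldsymbol{\xi} \in [0,1]^{n^2} : \bigl| F_{0,i}(\boldsymbol{\xi}) - \mu/X^d \bigr| \leq \delta \text{ for all } 1 \leq i \leq 2n+1 \right\}.
\]
This reduces the problem to producing a neighbourhood of $\boldsymbol{\xi}^{*}$ in $[0,1]^{n^2}$ on which the $2n+1$ equations are simultaneously well-approximated by $\mu/X^d$.

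By hypothesis $\mu/X^d = n\zeta^d$, so $\mathbf{F}_0(\boldsymbol{\xi}^{*}) = n\zeta^d \cdot \mathbf{1}$, and by the identity $\operatorname{Jac}_{\mathbf{F}_0}(\boldsymbol{\xi}^{*}) = d\zeta^{d-1} M_0$ from the opening of Section \ref{prop:SS}, the Jacobian at $\boldsymbol{\xi}^{*}$ has full row rank $2n+1$. I select a subset $S$ of $2n+1$ columns of $M_0$ that is linearly independent (this exists because $M_0$ has rank $2n+1$) and split $\boldsymbol{\xi} = (\u, \v)$ according to $S$; the partial derivative $\partial \mathbf{F}_0 / \partial \u$ at $\boldsymbol{\xi}^{*}$ equals $d\zeta^{d-1}$ times the invertible $(2n+1) \times (2n+1)$ submatrix indexed by $S$. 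The norms of this submatrix and its inverse depend only on $n$, while the factor $d\zeta^{d-1}$ is at least $d\varepsilon_0^{d-1}$; simultaneously, since $\zeta \leq 1 - \varepsilon_0$, the $\ell^\infty$-distance from $\boldsymbol{\xi}^{*}$ to the boundary of $[0,1]^{n^2}$ is at least $\varepsilon_0$.

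A quantitative implicit function theorem then supplies $\rho, \delta_0 > 0$, both depending only on $n, d, \varepsilon_0$, together with a smooth map $\Phi(\v, \ma{t})$ such that for every $\v$ in the $\rho$-ball around $\v^{*}$ and every $\|\ma{t}\|_\infty \leq \delta_0$ the point $\boldsymbol{\xi} = (\Phi(\v, \ma{t}), \v)$ lies in $(0,1)^{n^2}$ and satisfies $\mathbf{F}_0(\boldsymbol{\xi}) = n\zeta^d \cdot \mathbf{1} + \ma{t}$, with the Jacobian of the parametrization $(\v, \ma{t}) \mapsto \boldsymbol{\xi}$ uniformly bounded away from zero. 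A direct change of variables then gives, for every $\delta \leq \delta_0$,
\[
\operatorname{vol}\left\{ \boldsymbol{\xi} \in [0,1]^{n^2} : |F_{0,i}(\boldsymbol{\xi}) - n\zeta^d| \leq \delta \text{ for all } i \right\} \geq c(n, d, \varepsilon_0) \cdot (2\delta)^{2n+1},
\]
so dividing by $(2\delta)^{2n+1}$ and passing to the limit from the first step produces $\mathfrak{I} \geq c(n, d, \varepsilon_0) > 0$.

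The only delicate point will be keeping all implicit constants independent of $X$ and of the magic constant $\mu$: after the rescaling $\mu/X^d = n\zeta^d$, the geometry of the level set depends solely on $\zeta \in [\varepsilon_0, 1-\varepsilon_0]$, and consequently both the distance of $\boldsymbol{\xi}^{*}$ to the boundary of $[0,1]^{n^2}$ and the smallest singular value of $\partial \mathbf{F}_0 / \partial \u$ at $\boldsymbol{\xi}^{*}$ are bounded below uniformly, which in turn guarantees that the neighbourhood furnished by the implicit function theorem has size depending only on $n, d, \varepsilon_0$.
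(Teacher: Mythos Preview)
Your proposal is correct and follows essentially the same approach as the paper: both represent $\mathfrak{I}$ as a limit of smoothed volume integrals (the paper uses the Fej\'er-type kernel $\Phi_L(\eta)=L(1-L|\eta|)$ in place of your box indicator, citing Schmidt~\cite{MR0693325}), observe that $\boldsymbol{\xi}^*=(\zeta,\ldots,\zeta)$ is a non-singular real solution because $\operatorname{Jac}_{\mathbf{F}_0}(\boldsymbol{\xi}^*)=d\zeta^{d-1}M_0$ has full rank, apply the implicit function theorem near $\boldsymbol{\xi}^*$, and extract uniformity in $\mu,X$ from the restriction $\zeta\in[\varepsilon_0,1-\varepsilon_0]$. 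Your argument is simply a more explicit unpacking of what the paper delegates to \cite[Lemma~2]{MR0693325}.
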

\begin{proof}
It follows by the standard argument as in \cite{MR0693325} that
$$
\mathfrak{I}  =
\lim_{L \rightarrow \infty}
\int_{[0,1]^{n^2}} \prod_{i = 1}^{2n + 1} \Phi_L(  F_{0,i}(\x_0) - \mu X^{-d})  \mathrm{d} \x_0,
$$
where
$$
\Phi_L(\eta) =
\begin{cases}
  L(1 - L|\eta|) & \mbox{if } |\eta| \leq L^{-1}, \\
  0 & \mbox{otherwise. }
\end{cases}
$$
Furthermore, since $\mu X^{-d } = n \zeta^d$ with $\zeta \in [\varepsilon_0, 1 - \varepsilon_0]$, we see that
$\x_0 = (\zeta, \ldots, \zeta)$ is a non-singular solution to the system of equations $\mathbf{F}_0(\x_0) = \boldsymbol{\mu}$.
It then follows by an application of the implicit function theorem as in \cite[Lemma 2]{MR0693325} that
$\mathfrak{I} > 0$. In particular, since $\zeta \in [\varepsilon_0, 1 - \varepsilon_0]$, the lower bound is independent of $\mu$.
\end{proof}

Let $\varepsilon_0 > 0$ be sufficiently small. By the prime number theorem, for any $X$ sufficiently large there exists a prime $p_0$ satisfying
$$
\varepsilon_0 X <   p_0 < (1 - \varepsilon_0) X.
$$
Then for $\mu = n p_0^d$ we see that Lemmas \ref{serbdd} and \ref{intbdd} hold, from which Proposition \ref{SSSS} follows.

\section{Linear equations}
\label{sec:linear}
In this section, we record a simple result regarding certain systems of linear equations 
which we will need in the following Section \ref{Bound for T}. 
Let $1 \leq i_1 < m < m + 1 < i_2 \leq n$. We denote by $\mathbb{L}_n(i_1; m; i_2)$ the system of $2n$ linear equations
\begin{eqnarray}
x_i  +  y_i &=& 0 \quad (1 \leq  i \leq  n)
\notag
\\
\notag
x_{i + 1 } + y_{i}  &=& 0 \quad  (i \neq  i_1 - 1,  i_1,  i_2 - 1,  i_2, m)
\\
\notag
x_{i_2  } + y_{ i_1  - 1 } &=& 0
\\
\notag
x_{i_2 + 1} + y_{ i_1 }   &=& 0
\\
\notag
x_{ i_1   } + y_{ i_2 - 1 }  &=& 0
\\
\notag
x_{ i_1 + 1  } + y_{i_2 } &=& 0
\\
\notag
y_{m} &=& 0,
\end{eqnarray}
where $x_{n+1}$ is identified as $x_1$.

\begin{lemma}
\label{First}
Suppose $1 < i_1 <  m < m + 1 < i_2 < n$. Then
\begin{eqnarray}
\notag
\{ (\x, \y) \in \RR^{2n}: (\x,\y) \textnormal{ satisfies }   \mathbb{L}_n(i_1; m; i_2) \} = \{ \mathbf{0}  \}.
\end{eqnarray}
\end{lemma}
\begin{proof}
Consider the system of equations $\mathbb{L}_n(i_1; m; i_2)$.
By substituting $y_m = 0$ into the equation $x_m + y_m = 0$ and following through with the consequences,
we obtain
$$
x_{i} = y_i = 0 \quad (i \in [i_1 + 1, m]).
$$
We can then deduce that
$$
0 = x_{i_1 + 1} = y_{i_2} = x_{i_2} = y_{i_1 - 1} = x_{i_1 - 1},
$$
which further implies that
$$
x_{i} = y_i = 0 \quad (i \in [1, i_1 - 1]).
$$
Therefore, the system becomes
\begin{eqnarray}
\notag
x_{i_1} + y_{i_1} = 0 && x_{i_2 + 1} + y_{i_1 } = 0
\\
\notag
x_{m + 1} + y_{m + 1} = 0 && x_{m + 2} + y_{m + 1} = 0
\\
\notag
&\vdots&
\\
\notag
x_{i_2 - 2} + y_{i_2 - 2} = 0 && x_{i_2 - 1} + y_{i_2 - 2} = 0
\\
\notag
x_{i_2 - 1} + y_{i_2 - 1} = 0 && x_{i_1} + y_{i_2 - 1} = 0
\\
\notag
x_{i} + y_{i} = 0 && x_{i + 1} + y_{i} = 0 \quad (i \in [i_2 + 1, n]).
\end{eqnarray}
By substituting $x_1 = 0$, we obtain
$$
x_{i} = y_i = 0 \quad (i \in [i_2 + 1, n]).
$$
which further implies that
$$
0 = x_{i_2 + 1} = y_{i_1} = x_{i_1} = y_{i_2 - 1}.
$$
Finally, by substituting $y_{i_2 - 1} = 0$ into the remaining system, we see that
$$
x_{i} = y_i = 0 \quad (i \in [m + 1, i_2 - 1])
$$
as desired.
\end{proof}

\section{Preliminaries}
\label{sec:prep}

Let $I_n$ denote the $n \times n$ identity matrix and define $\widetilde{I}_n$ to be the $(n-1) \times n$ matrix obtained by removing the last row from $I_n$.
Then the $(2n+1) \times n^2$  coefficient matrix of the system (\ref{eq:msos}) is
\begin{eqnarray}
\label{MMM0}
M_0 =
\begin{bmatrix}
  \begin{bmatrix}
  1 & 1 & \cdots & 1 \\
  0 & 0 & \cdots & 0 \\
  \vdots & \vdots & \vdots & \vdots \\
  0 & 0 & \cdots & 0
  \end{bmatrix}
     & \begin{bmatrix}
  0 & 0 & \cdots & 0 \\
  1 & 1 & \cdots & 1 \\
  \vdots & \vdots & \vdots & \vdots \\
  0 & 0 & \cdots & 0
  \end{bmatrix} & \cdots  & \begin{bmatrix}
  0 & 0 & \cdots & 0 \\
  0 & 0 & \cdots & 0 \\
  \vdots & \vdots & \vdots & \vdots \\
  1 & 1 & \cdots & 1
  \end{bmatrix} \\
  \widetilde{I}_n & \widetilde{I}_n & \cdots  & \widetilde{I}_n \\
  \begin{bmatrix}
  1 & 0 & \cdots & 0 \\
  0 & 0 & \cdots & 1
  \end{bmatrix} &
  \begin{bmatrix}
  0 & 1 & \cdots & 0 & 0 \\
  0 & 0 & \cdots &  1 & 0
  \end{bmatrix}
   & \cdots & \begin{bmatrix}
  0 & 0 & \cdots & 1 \\
  1 & 0 & \cdots & 0
  \end{bmatrix}
\end{bmatrix}.
\end{eqnarray}
We shall denote by $\c_{i,j}$ the $((i-1)n + j)$-th column of $M_0$, for each $1 \leq i, j \leq n$.
The first subscript $i$ will always satisfy $1 \leq i \leq n$, but for the second subscript $j$, to simplify our exposition, we will consider it modulo $n$, that
is
$$
c_{i, j + m n} = c_{i, j},
$$
for any  $1 \leq j \leq n$ and $m \in \mathbb{Z}$.
\begin{definition}
We shall refer to the set  $\{ \c_{i,1}, \ldots, \c_{i,n} \}$ as the $i$-th block for each $1 \leq i \leq n$.
\end{definition}
Given $\mathfrak{B} \subseteq \textnormal{Col}(M_0)$, we define
\begin{eqnarray}
\label{B[i]}
\mathfrak{B}[i] = \{ 1 \leq j \leq n: \c_{i, j} \in \mathfrak{B}  \}
\end{eqnarray}
for each $1 \leq i \leq n$.

Let
$$
\mathcal{S} = \mathcal{S}_1 \cup \mathcal{S}_2,
$$
where
$$
\mathcal{S}_1 = \{ \c_{1,1}, \c_{2,2}, \ldots , \c_{n - 1, n - 1} , \c_{n,n} \}
$$
and
$$
\mathcal{S}_2 = \{ \c_{1,n}, \c_{2,n-1}, \ldots  , \c_{n-1, 2}  , \c_{n, 1}  \}  \setminus \mathcal{S}_1.
$$
The columns in $\mathcal{S}_1$ are precisely the ones with $1$ in the $2n$-th entry and
$\mathcal{S}_2$ the ones with $1$ in the $(2n+1)$-th entry that are not contained in $\mathcal{S}_1$.
We remark that
$$
\# \mathcal{S}_2
= \begin{cases}
    n  & \mbox{if $n$ is even},   \\
    n - 1 & \mbox{if $n$ is odd}.
  \end{cases}
$$

Let us define
\begin{eqnarray}
\label{defep}
\epsilon(n) = \begin{cases}
                1 & \mbox{if $n$ is even}, \\
                0 & \mbox{if $n$ is odd},
              \end{cases}
\end{eqnarray} 
and
\begin{eqnarray}
\label{cond1}
N  = \frac{n  - 1  + \epsilon(n)  }{2 } - 2.
\end{eqnarray}
For each $0 \leq \ell \leq N$,  we define
$$
\widetilde{ \mathfrak{B}  }_\ell
= \{
\c_{1, 1 + 2 \ell }, \c_{2, 2 +  2  \ell  },  \ldots,  \c_{n, n + 2 \ell }  \} \bigcup  \left(  \{  \c_{1, 1 + (2 \ell + 1)  }, \c_{2, 2 +  (2 \ell + 1) }, \ldots,  \c_{n, n +   (2 \ell + 1)  }
\} \setminus \{ \c_{n - 2 \ell - 1,  n}  \} \right).
$$
This set contains precisely one column of the form $\c_{*, n}$, which is
$\c_{n- 2 \ell, n }$, a column whose entries between
$(n+1)$-th  and $(2n-1)$-th position are all $0$. 
We will use these sets $\widetilde{ \mathfrak{B}  }_\ell$ to construct the collection of pairwise disjoint sets of $2n + 1$ linearly independent columns of $M$ necessary to complete the proof of Theorem \ref{rank}.

By (\ref{cond1}) it follows that
\begin{eqnarray}
\label{pdis}
\widetilde{ \mathfrak{B}  }_\ell \cap \widetilde{ \mathfrak{B}  }_{\ell'} = \emptyset \quad (0 \leq \ell < \ell' \leq N).
\end{eqnarray}

\begin{lemma}
\label{badvec}
Given any $1 \leq \ell \leq N$, we have
$$
\widetilde{ \mathfrak{B}  }_\ell \cap \mathcal{S}  = \left\{ \c_{\frac{n +  1 - \epsilon(n)}{2}  - \ell,  \frac{n + 1 - \epsilon(n)}{2} + \epsilon(n) + \ell }, \c_{n  - \ell, \ell + 1}   \right\}.
$$
\end{lemma}
\begin{proof}
Since $\mathcal{S}_1 \subseteq \widetilde{ \mathfrak{B}  }_0$, it follows from (\ref{pdis}) that
$$
\widetilde{ \mathfrak{B}  }_\ell \cap \mathcal{S}_1 = \emptyset.
$$
Therefore, if $\widetilde{ \mathfrak{B}  }_\ell \cap \mathcal{S} \neq \emptyset$, then there  exist $\epsilon \in \{0,1\}$ and $ 1 \leq i, j \leq n$ such that
$$
(i, i + 2 \ell  + \epsilon + m(i, \ell, \epsilon) n) = (j, n + 1 - j),
$$
where
$$
m(i, \ell, \epsilon)
= \begin{cases}
      0 & \mbox{if } 1 \leq  i + 2 \ell  + \epsilon \leq n, \\
      -1 & \mbox{if }  n < i + 2 \ell  + \epsilon \leq 2n.
    \end{cases}
$$
It follows that $i = j$ and
$$
j + 2 \ell  + \epsilon +  m(j, \ell, \epsilon) n = n + 1 - j,
$$
or equivalently
$$
j =  \frac{n + 1 - \epsilon}{2}  - \ell - \frac{m(j, \ell, \epsilon)  n}{2}.
$$

Suppose  $m(j, \ell, \epsilon) = 0$. Then it must be that $\epsilon = 1$ if $n$ is even and $0$ if $n$ is odd.
Thus we see that
$$
\c_{\frac{n + 1 - \epsilon(n) }{2}   - \ell,  \frac{n + 1 - \epsilon (n)}{2} + \epsilon(n) + \ell } \in \widetilde{ \mathfrak{B}  }_\ell \cap \mathcal{S};
$$
here we note that $1 \leq \frac{n + 1 - \epsilon(n)}{2} + \epsilon(n) + \ell  \leq n$.

On the other hand, suppose $m(j, \ell, \epsilon) = -1$. Then
$$
j =  n + \frac{1 -  \epsilon}{2}   - \ell.
$$
In particular, it must be that $\epsilon = 1$. Thus we see that
$$
\c_{n   - \ell, \ell + 1} \in \widetilde{ \mathfrak{B}  }_\ell \cap \mathcal{S}.
$$
Finally, we have $\c_{\frac{n + 1 - \epsilon (n) }{2}   - \ell,  \frac{n + 1 - \epsilon (n) }{2} + \epsilon (n) + \ell } \neq \c_{n   - \ell, \ell + 1 }$,
since
$$
1 \leq  \frac{n + 1 - \epsilon(n) }{2}   - \ell  \leq      \frac{n}{2} - \ell   <  n  - \ell.
$$
\end{proof}
As a consequence of this lemma, there are precisely two columns in $\widetilde{ \mathfrak{B}  }_\ell$
whose last two entries are not $0$.  In the next section, we replace these two columns from $\widetilde{ \mathfrak{B}  }_\ell$ with appropriate columns
whose last two entries are $0$ such that the resulting set is linearly independent. Finally, we complete the sets 
by adding two columns whose last two entries are not $0$ which preserve the linear independence.



\section{Proof of Theorem \ref{rank}}
\label{Even}
Let $n \geq 8$ and $1 \leq \ell \leq N$, where $N$ is defined in (\ref{cond1}).
Our goal is to replace the two columns in $\widetilde{ \mathfrak{B}  }_\ell$ from Lemma \ref{badvec}
in a suitable manner so that all columns have $0$ for the last two entries.
Let us denote
$$
i_1(\ell) = \frac{n + 1 - \epsilon(n) }{2}   - \ell
\quad
\textnormal{and}
\quad
i_2(\ell) =  n   - \ell,
$$
where $\epsilon(n)$ is defined in (\ref{defep}).
Then the two columns we need to replace are
\begin{eqnarray}
\notag 
\c_{\frac{n + 1 - \epsilon(n) }{2}  - \ell,  \frac{n + 1 - \epsilon(n) }{2} + \epsilon(n)  + \ell }
=
\c_{i_1(\ell),  i_1(\ell) +  2 \ell + \epsilon(n) }
\quad
\textnormal{and}
\quad
\c_{n  - \ell, \ell + 1} = \c_{i_2(\ell), i_2(\ell) + 2 \ell + 1  }.
\end{eqnarray}
We shall modify $\widetilde{ \mathfrak{B}  }_\ell$ as follows
\begin{eqnarray}
\notag 
\mathfrak{B}_\ell
&=&  \widetilde{ \mathfrak{B}  }_\ell \setminus
\left(
\{ \c_{ i_1(\ell),   i_1(\ell) +  2 \ell   }, \c_{  i_1(\ell),   i_1(\ell) +  2 \ell + 1}  \}
\bigcup
\{  \c_{ i_2(\ell),  i_2(\ell) + 2 \ell  }, \c_{ i_2(\ell),  i_2(\ell) + 2 \ell + 1 }   \} \right)
\\
\notag
&&\bigcup
\{ \c_{ i_1(\ell),   i_2(\ell) + 2 \ell  }, \c_{ i_1(\ell),   i_2(\ell) + 2 \ell + 1}  \}
\bigcup
\{  \c_{ i_2(\ell),  i_1(\ell) +  2 \ell   }, \c_{  i_2(\ell) ,  i_1(\ell) +  2 \ell + 1}   \},
\end{eqnarray}
that is we first remove the two columns in $\widetilde{\mathfrak{B}}_\ell$ from the $i_1(\ell)$-th and $i_2(\ell)$-th block
and then add back in two columns from each of these blocks with switched positions. 
We remark that we know there are two columns in  $\widetilde{\mathfrak{B}}_\ell$ from the $i_1(\ell)$-th and $i_2(\ell)$-th block, because
\begin{eqnarray}
\label{relnn}
1 < i_1(\ell) < n - 2 \ell - 1 < n - 2 \ell < i_2(\ell) < n.
\end{eqnarray}
It is clear that given $1 \leq \ell < \ell' \leq N$ we have
\begin{eqnarray}
\label{modiff}
\{ i_1(\ell) , i_2(\ell) \} \cap  \{ i_1(\ell') , i_2(\ell') \} = \emptyset.
\end{eqnarray}
Therefore, the two blocks of $\widetilde{\mathfrak{B}}_\ell$ which get modified to construct $\mathfrak{B}_\ell$ are unique to $\ell$.
Let us also recall the notation (\ref{B[i]}) and record the relations
\begin{eqnarray}
\label{Bind}
\mathfrak{B}_\ell [ i_1(\ell) ] = \{ i_2(\ell) +  2 \ell, i_2(\ell) +  2 \ell + 1  \}
\quad
\textnormal{and}
\quad
\mathfrak{B}_\ell [ i_2(\ell) ] = \{ i_1(\ell) +  2 \ell, i_1(\ell) +  2 \ell + 1  \}
\end{eqnarray}
for each $1 \leq \ell \leq N$.

\begin{lemma}
Given $1 \leq \ell \leq N$ such that
$$
\ell \neq
\begin{cases}
  \lfloor n / 4  \rfloor & \mbox{if $n$ is even},  \\
  \lfloor n / 4  \rfloor, \lfloor n / 4 \rfloor + 1 & \mbox{if $n$ is odd},
\end{cases}
$$
we have
$$
\{ \c_{i_1(\ell), i_2(\ell) + 2 \ell  }, \c_{i_1(\ell),  i_2(\ell) + 2 \ell + 1}  \}
\bigcap
\mathcal{S} = \emptyset
$$
and
$$
\{  \c_{i_2(\ell), i_1(\ell) +  2 \ell   }, \c_{i_2(\ell), i_1(\ell) +  2 \ell + 1 }   \}
\bigcap
\mathcal{S} = \emptyset.
$$
In particular, every column of $\mathfrak{B}_\ell$ has $0$ for the last two entries.
\end{lemma}
\begin{proof}
Let $\epsilon \in \{0,1\}$.
Since it can be verified that
\begin{eqnarray}
\notag
i_1(\ell) + i_2(\ell) + 2 \ell + \epsilon
\equiv  \frac{n + 1 - \epsilon(n)}{2}  + \epsilon
\not \equiv n + 1    \pmod{n}
\end{eqnarray}
and
\begin{eqnarray}
\notag
\pm ( i_1(\ell) - i_2(\ell) )
\equiv \pm  \frac{n + 1 - \epsilon(n)}{2}
\notag
\not \equiv  2 \ell + \epsilon     \pmod{n},
\end{eqnarray}
the result follows.
\end{proof}

Let
$\mathcal{Z} \subseteq \{1, \ldots,  N \}$ be a set with the following property:
given any $m, \ell \in \mathcal{Z}$, we have
\begin{eqnarray}
\label{propZ}
\frac{n + 1 - \epsilon(n) }{2} \not \equiv 2(m - \ell) + \delta \pmod{n}
\end{eqnarray}
for any
$\delta \in \{-1, 0, 1\}$.

Let us define
$$
\mathfrak{R} = \bigcup_{\ell \in \mathcal{Z}}  \widetilde{\mathfrak{B}}_{\ell}.  
$$

\begin{lemma}
\label{disj}
Given $\ell \in \mathcal{Z}$, we have
$$
\mathfrak{B}_\ell [ i_1(\ell) ]
\cap
\mathfrak{R}  [ i_1(\ell) ] = \emptyset
$$
and
$$
\mathfrak{B}_\ell [ i_2(\ell) ]
\cap
\mathfrak{R}  [ i_2(\ell) ]  =  \emptyset.
$$
\end{lemma}
\begin{proof}
First we recall the relations (\ref{Bind}). Let us suppose
$$
\{ i_2(\ell) + 2 \ell,   i_2(\ell) + 2 \ell + 1  \}
\bigcap
\mathfrak{R}  [i_1(\ell)] \neq \emptyset.
$$
Then there exists $m \in \mathcal{Z}$ such that
$$
i_2(\ell) + 2 \ell + \epsilon \equiv i_1(\ell) + 2m + \epsilon'  \pmod{n}
$$
for some $\epsilon, \epsilon' \in \{0, 1\}$. Since the congruence is equivalent to
$$
 \frac{n + 1 - \epsilon(n) }{2} \equiv    2 (\ell - m) + \epsilon  - \epsilon'   \pmod{n},
$$
we reach contradiction by the definition of $\mathcal{Z}$.

Similarly, let us suppose
$$
\{ i_1(\ell) + 2 \ell ,   i_1(\ell) + 2 \ell + 1 \}
\bigcap
\mathfrak{R}  [ i_2(\ell) ] \neq  \emptyset.
$$
Then there exists $m \in \mathcal{Z}$ such that
$$
i_1(\ell) + 2 \ell + \epsilon \equiv i_2(\ell) + 2m + \epsilon'  \pmod{n}
$$
for some $\epsilon, \epsilon' \in \{0, 1\}$. From this congruence, we may reach contradiction in the same way as above.
\end{proof}

\begin{lemma}
We have
$$
\mathfrak{B}_{\ell} \cap \mathfrak{B}_{\ell'} = \emptyset
$$
for any distinct $\ell, \ell' \in \mathcal{Z}$.
\end{lemma}
\begin{proof}
Let $Z = \# \mathcal{Z}$ and denote
$$
\mathcal{Z} = \{ \ell_1 , \ldots, \ell_Z  \}.
$$
We shall prove by induction that
$$
\mathfrak{B}_{\ell_i} \bigcap
\left( \mathfrak{B}_{\ell_1} \cup \cdots \cup \mathfrak{B}_{\ell_{i - 1}} \cup \widetilde{\mathfrak{B}}_{\ell_{i + 1} } \cup \cdots \cup  \widetilde{\mathfrak{B}}_{\ell_Z} \right)
= \emptyset
$$
for each $1 \leq i  \leq Z$.
The base case $i = 1$ follows easily from (\ref{pdis}) and Lemma \ref{disj}.
Let us suppose the statement holds for all values greater than or equal to $1$ and less than $i$, for some $1 < i \leq Z$.
Let $i + 1 < j \leq Z$. Then it follows from (\ref{pdis}) and Lemma \ref{disj} that
$$
\mathfrak{B}_{\ell_{i + 1}} \cap \widetilde{\mathfrak{B}}_{\ell_j}
=
\left( \mathfrak{B}_{\ell_{i + 1}} \setminus \widetilde{\mathfrak{B}}_{\ell_{i + 1}} \right) \cap \widetilde{\mathfrak{B}}_{\ell_j}
\subseteq
\left( \mathfrak{B}_{\ell_{i + 1}} \setminus \widetilde{\mathfrak{B}}_{\ell_{i + 1}} \right) \cap \mathfrak{R}
= \emptyset.
$$
Therefore, it remains to prove
$$
\mathfrak{B}_{\ell_{i + 1}} \bigcap
\left( \mathfrak{B}_{\ell_1} \cup \cdots \cup \mathfrak{B}_{\ell_i} \right)
= \emptyset.
$$
Since it follows from the inductive hypothesis that
$$
\widetilde{\mathfrak{B}}_{\ell_{i + 1}} \cap \mathfrak{B}_{\ell_s}  = \emptyset \quad (1 \leq s \leq i),
$$
it suffices to prove
$$
\left( \mathfrak{B}_{\ell_{i + 1} } \setminus  \widetilde{\mathfrak{B}}_{\ell_{i + 1} } \right)  \cap \mathfrak{B}_{\ell_s}  = \emptyset \quad (1 \leq s \leq i),
$$
which in turn follows from
\begin{eqnarray}
\label{BBBB}
\mathfrak{B}_{\ell_{i + 1}}[i_1(\ell_{i + 1})] \cap \mathfrak{B}_{\ell_s} [i_1(\ell_{i + 1})] = \emptyset
\quad
\textnormal{and}
\quad
\mathfrak{B}_{\ell_{i + 1}}[i_2(\ell_{i + 1})] \cap \mathfrak{B}_{\ell_s} [i_2(\ell_{i + 1})] = \emptyset
\end{eqnarray}
for all $1 \leq s \leq i$.
Since the $i_1(\ell_{i + 1})$-th and $i_2(\ell_{i + 1})$-th block of $\widetilde{\mathfrak{B}}_{\ell_s}$ do not get modified to construct $\mathfrak{B}_{\ell_s}$,
which follows from (\ref{modiff}), we obtain
$$
\mathfrak{B}_{\ell_s}[ i_1(\ell_{i + 1})] = \widetilde{\mathfrak{B}}_{\ell_s}[ i_1(\ell_{i + 1})] \subseteq \mathfrak{R}[ i_1(\ell_{i + 1})]
$$
and
$$
\mathfrak{B}_{\ell_s}[ i_2(\ell_{i + 1})] = \widetilde{\mathfrak{B}}_{\ell_s}[ i_2(\ell_{i + 1})]  \subseteq \mathfrak{R}[ i_2(\ell_{i + 1})]
$$
for all $1 \leq s \leq i$. Therefore, (\ref{BBBB}) and consequently the result follow from Lemma \ref{disj}.
\end{proof}

Finally, we prove that $\mathfrak{B}_\ell$ is a  linearly independent set.
\begin{lemma}
Given $\ell \in \mathcal{Z} \setminus \{ \lfloor n/4 \rfloor, \lfloor n/4 \rfloor + 1 \}$, we have
$$
\dim \textnormal{Span}_{\RR} \mathfrak{B}_\ell = 2n-1.
$$
\end{lemma}
\begin{proof}
Suppose we have the following linear combination of columns in $\mathfrak{B}_\ell$:
\begin{eqnarray}
\label{veceq}
\mathbf{0} &=&
\sum_{ \substack{ 1 \leq i \leq n  \\ i \neq  i_1(\ell),  i_2(\ell)  } }  (  x_i \c_{i, i + 2 \ell } + y_i \c_{i, i + 2 \ell + 1 }  )
\\
\notag
&&
+ x_{i_1(\ell)} \c_{i_1(\ell),  i_2(\ell) + 2 \ell  } +  y_{i_1(\ell)} \c_{ i_1(\ell),
 i_2(\ell) + 2 \ell + 1   }
\\
\notag
&&
+  x_{i_2(\ell)} \c_{ i_2(\ell),  i_1(\ell) + 2 \ell    } + y_{i_2(\ell)} \c_{  i_2(\ell),  i_1(\ell) + 2 \ell + 1},
\end{eqnarray}
with $y_{n - 2 \ell - 1} = 0$ (because we have (\ref{relnn}) and $\c_{n - 2 \ell - 1, n } \notin \mathfrak{B}_\ell$).
Our goal is to show $x_i = y_i = 0$ for all $1 \leq i \leq n$.
Let us recall that $\c_{n - 2 \ell, n} \in \mathfrak{B}_\ell$ whose entries between $(n+1)$-th and $(2n-1)$-th positions are all $0$.
Furthermore, this is the only column in $\mathfrak{B}_\ell$ of the form $\c_{*, n}$, which follows from the definition of $\mathfrak{B}_\ell$ and
(\ref{relnn}). By the definition of $\c_{i,j}$,
the vector equation (\ref{veceq}) is equivalent to the following system of linear equations
\begin{eqnarray}
x_i  +  y_i &=& 0 \quad (1 \leq  i \leq  n)
\notag
\\
\notag
x_{i + 1 } + y_{i}  &=& 0 \quad  (i \neq  i_1(\ell) - 1, i_1(\ell), i_2(\ell) - 1,  i_2(\ell), n - 2 \ell - 1)
\\
\notag
x_{i_2(\ell)} + y_{ i_1(\ell) - 1  } &=& 0
\\
\notag
x_{i_2(\ell) + 1} + y_{ i_1(\ell)}   &=& 0
\\
\notag
x_{ i_1(\ell) } + y_{ i_2(\ell) - 1 }  &=& 0
\\
\notag
x_{ i_1(\ell) + 1  } + y_{i_2(\ell)} &=& 0
\\
\notag
y_{n - 2 \ell - 1} &=& 0.
\end{eqnarray}
It is clear that this system of equations is precisely $\mathbb{L}( i_1(\ell); n - 2 \ell - 1; i_2(\ell) )$.
Therefore, the result follows from Lemma \ref{First} since we have (\ref{relnn}).
\end{proof}

\subsection{Final bound for $T_0$}
\label{Bound for T}
Let us set
$$
\mathcal{Z} = \left\{ 1,  \ldots,    \left\lfloor  \frac{n}{4} \right\rfloor - 1 \right \}.
$$
Then we have
\begin{eqnarray}
\notag
|  2 (m - \ell) + \delta  |  \leq  2  \left\lfloor  \frac{n}{4} \right\rfloor  - 3
\leq  \frac{n - 6}{2}
\end{eqnarray}
for any $m, \ell \in \mathcal{Z}$ and $\delta \in \{-1, 0, 1\}$, which implies  that $\mathcal{Z}$ satisfies (\ref{propZ}).
Since every column in $\mathfrak{B}_\ell$  has $0$ for the last two entries, we obtain the following as an immediate consequence.
\begin{corollary}
Let $\ell \in \mathcal{Z}$ and  $(\a, \b) \in \mathcal{S}_1 \times \mathcal{S}_2$.
Then $\mathfrak{B}_\ell \cup \{ \a, \b  \}$ is a linearly independent set.
\end{corollary}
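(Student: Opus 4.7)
The plan is to exploit the fact that $\mathfrak{B}_\ell$ was designed precisely so that every one of its vectors has zero in the last two coordinates (positions $2n$ and $2n+1$), while $\a$ and $\b$ are distinguished by having exactly one nonzero entry among these two coordinates. Concretely, recall from Lemma~\ref{badvec} and the definition (\ref{def:B}) of $\mathfrak{B}_\ell$ that the two problematic columns $\c_{n/2+1-\ell,\,n/2+\ell}$ and $\c_{n+1-\ell,\,\ell}$ belonging to $\mathcal{S}$ have been removed and replaced by columns $\c_{n/2+1-\ell,\,j_1(\ell)}$ and $\c_{n+1-\ell,\,j_2(\ell)}$ that lie outside $\mathcal{S}$. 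Thus for every $\v \in \mathfrak{B}_\ell$, both the $2n$-th and $(2n+1)$-th coordinates of $\v$ vanish.

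Next I would record the $(2n,2n+1)$-coordinate pattern of the two added vectors. Since $n$ is even, the main diagonal $\{(i,i)\}$ and the anti-diagonal $\{(i,n+1-i)\}$ are disjoint, so for $\a = \c_{i,i} \in \mathcal{S}_1$ the $2n$-th entry equals $1$ and the $(2n+1)$-th entry equals $0$, whereas for $\b = \c_{i,n+1-i} \in \mathcal{S}_2$ (with $i \ne n+1-i$) the $2n$-th entry is $0$ and the $(2n+1)$-th entry equals $1$.

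Starting from an arbitrary linear relation
\[
\sum_{\v \in \mathfrak{B}_\ell} c_\v \v + c_\a \a + c_\b \b = \mathbf{0},
\]
I would then read off the $2n$-th coordinate to conclude $c_\a = 0$ and the $(2n+1)$-th coordinate to conclude $c_\b = 0$. At this stage what remains is a linear relation entirely within $\mathfrak{B}_\ell$, and since we have already established in the preceding discussion that $\dim \textnormal{Span}_{\RR} \mathfrak{B}_\ell = 2n-1 = |\mathfrak{B}_\ell|$, every remaining coefficient $c_\v$ must also vanish. No step beyond these coordinate-extraction observations is needed; there is no real obstacle here since all the hard work (the choice of $j_1(\ell)$ and $j_2(\ell)$ and the verification of linear independence of $\mathfrak{B}_\ell$ itself) has already been carried out via Lemma~\ref{First1} earlier in the section.
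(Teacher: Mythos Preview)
Your argument is correct and is exactly the reasoning the paper intends: it states the corollary as ``an immediate consequence'' of the fact that every vector in $\mathfrak{B}_\ell$ has $0$ in the last two entries, and your proof simply spells out that immediate consequence by reading off the $2n$-th and $(2n+1)$-th coordinates. There is nothing to add.
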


Since
$$
\#\mathcal{S}_1, \# \mathcal{S}_2 \geq  n - 1  \geq |\mathcal{Z}| =  \left\lfloor  \frac{n}{4} \right\rfloor - 1,
$$
and $\mathfrak{B}_\ell$ are pairwise disjoint, we easily obtain $\lfloor n/4 \rfloor - 1$ pairwise disjoint sets of the form
$$
\mathfrak{D}_\ell = \mathfrak{B}_\ell \cup \{ \a_\ell, \b_\ell  \},
$$
where $(\a_\ell, \b_\ell) \in \mathcal{S}_1 \times \mathcal{S}_2$.
Thus we have established
$$
T_0 \geq  \left\lfloor  \frac{n}{4} \right\rfloor - 1,
$$
where $T_0$ is from the statement of the theorem.

\bibliographystyle{rome}
\bibliography{diag}

\newcommand{\etalchar}[1]{$^{#1}$}
\providecommand{\bysame}{\leavevmode\hbox to3em{\hrulefill}\thinspace}
\providecommand{\MR}{\relax\ifhmode\unskip\space\fi MR }
\providecommand{\MRhref}[2]{%
  \href{http://www.ams.org/mathscinet-getitem?mr=#1}{#2}
}
\providecommand{\href}[2]{#2}
\begin{thebibliography}{DEvdE07}

\bibitem[BC92]{BC}
J.~Br\"{u}dern and R.~J. Cook, On simultaneous diagonal equations and
  inequalities. \emph{Acta Arith.} \textbf{62} (1992), no.~2, 125--149.

\bibitem[BDG16]{BDG}
J.~Bourgain, C.~Demeter, and L.~Guth, Proof of the main conjecture in
  {V}inogradov’s mean value theorem for degrees higher than three. \emph{Ann.
  of Math. (2)} \textbf{184} (2016), no.~2, 633--682.

\bibitem[Bir62]{birch}
B.~J. Birch, Forms in many variables. \emph{Proc. Roy. Soc. London Ser. A}
  \textbf{265} (1961/62), 245--263.

\bibitem[Boy]{list}
C.~Boyer, The multimagic squares site, \url{http://www.multimagie.com}.

\bibitem[Boy05]{boyer}
\bysame, Some notes on the magic squares of squares problem. \emph{Math.
  Intelligencer} \textbf{27} (2005), no.~2, 52--64.

\bibitem[BTVA22]{BTVA}
N.~Bruin, J.~Thomas, and A.~V\'{a}rilly-Alvarado, Explicit computation of
  symmetric differentials and its application to quasihyperbolicity.
  \emph{Algebra Number Theory} \textbf{16} (2022), no.~6, 1377--1405.

\bibitem[BW23]{BruWoo}
J.~Br\"{u}dern and T.~D. Wooley, On {W}aring's problem for larger powers.
  \emph{J. Reine Angew. Math.} \textbf{805} (2023), 115--142.

\bibitem[Cam60]{Camm}
S.~Cammann, The evolution of magic squares in {C}hina. \emph{Journal of the
  American Oriental Society} \textbf{80} (1960), no.~2, 116--124.

\bibitem[DEvdE07]{DEV}
H.~Derksen, C.~Eggermont, and A.~van~den Essen, Multimagic squares. \emph{Amer.
  Math. Monthly} \textbf{114} (2007), no.~8, 703--713.

\bibitem[DL69]{DavenportLewis}
H.~Davenport and D.~J. Lewis, Simultaneous equations of additive type.
  \emph{Philos. Trans. Roy. Soc. London Ser. A} \textbf{264} (1969), 557--595.

\bibitem[Flo]{DF}
D.~Flores, A circle method approach to $k$-multimagic squares.
  \emph{arXiv:2406.08161}.

\bibitem[HMP{\etalchar{+}}23]{HMPSX}
C.~Hu, J.~Meng, F.~Pan, M.~Su, and S.~Xiong, On the existence of a normal
  trimagic square of order 16 n. \emph{Journal of Mathematics} \textbf{2023}
  (2023), 1--9.

\bibitem[HP24]{FH}
C.~Hu and F.~Pan, New infinite classes for normal trimagic squares of even
  orders using row–square magic rectangles. \emph{Mathematics} \textbf{12}
  (2024), no.~8.

\bibitem[Kee11]{Keed}
A.D. Keedwell, Gaston {T}arry and multimagic squares. \emph{The Mathematical
  Gazette} \textbf{95} (2011), no.~534, 454--468.

\bibitem[PLCX21]{CLPX}
F.~Pan, W.~Li, G.~Chen, and B.~Xin, A complete solution to the existence of
  normal bimagic squares of even order. \emph{Discrete Math.} \textbf{344}
  (2021), no.~4, Paper No. 112292, 9.

\bibitem[RM18]{simon}
S.~L. Rydin~Myerson, Quadratic forms and systems of forms in many variables.
  \emph{Invent. Math.} \textbf{213} (2018), no.~1, 205--235.

\bibitem[RY]{RomY}
N.~Rome and S.~Yamagishi, Integral solutions to systems of diagonal equations.
  \emph{arXiv:2406.09256}.

\bibitem[Sch82]{MR0693325}
W.~M. Schmidt, Simultaneous rational zeros of quadratic forms, \emph{Seminar on
  {N}umber {T}heory, {P}aris 1980-81}, Progr. Math., vol.~22, Birkh\"{a}user,
  Boston, MA, 1982, pp.~281--307.

\bibitem[VA21]{VA}
A.~V\'{a}rilly-Alvarado, The geometric disposition of {D}iophantine equations.
  \emph{Notices Amer. Math. Soc.} \textbf{68} (2021), no.~8, 1291--1300.

\bibitem[Vau89]{Va89}
R.~C. Vaughan, A new iterative method in {W}aring's problem. \emph{Acta Math.}
  \textbf{162} (1989), 1--71.

\bibitem[Woo92]{Woooo92}
T.~D. Wooley, Large improvements in {W}aring's problem. \emph{Ann. of Math.
  (2)} \textbf{135} (1992), no.~1, 131--164.

\bibitem[Woo95]{Woo95}
\bysame, New estimates for smooth {W}eyl sums. \emph{J. London Math. Soc. (2)}
  \textbf{51} (1995), no.~1, 1--13.

\bibitem[Woo16]{WooCub}
\bysame, The cubic case of the main conjecture in {V}inogradov's mean value
  theorem. \emph{Adv. Math} \textbf{294} (2016), 532--561.

\bibitem[Woo19]{WooNest}
\bysame, Nested efficient congruencing and relatives of {V}inogradov's mean
  value theorem. \emph{Proc. London Math. Soc. (3)} \textbf{118} (2019), no.~4,
  942--1016.

\end{thebibliography}

\end{document}